\newcommand{\der}{\delta}
\newcommand{\pl}{\partial}
\newcommand{\bx}{{\bf x}}
\newcommand{\E}{\mathbb E}
\newcommand{\R}{\mathbb R}
\newcommand{\N}{\mathbb N}
\newcommand{\PP}{\mathbb P}
\newcommand{\bd}{\mathbf}
\newcommand{\ca}{\mathcal A}
\newcommand{\cb}{\mathcal B}
\newcommand{\cac}{\mathcal C}
\newcommand{\cf}{\mathcal F}
\newcommand{\ch}{\mathcal H}
\newcommand{\ci}{\mathcal I}
\newcommand{\cl}{\mathcal L}
\newcommand{\cm}{\mathcal M}
\newcommand{\cn}{\mathcal N}
\newcommand{\co}{\mathcal O}
\newcommand{\cp}{\mathcal P}
\newcommand{\cs}{\mathcal S}
\newcommand{\cu}{\mathcal U}
\newcommand{\al}{\alpha}
\newcommand{\ep}{\varepsilon}
\newcommand{\ga}{\gamma}
\newcommand{\ka}{\kappa}
\newcommand{\la}{\lambda}
\newcommand{\oom}{\Omega}
\newcommand{\si}{\sigma}
\newcommand{\vp}{\varphi}
\newcommand{\lp}{\left(}
\newcommand{\rp}{\right)}
\newcommand{\lc}{\left[}
\newcommand{\rc}{\right]}
\newcommand{\lcl}{\left\{}
\newcommand{\rcl}{\right\}}
\newcommand{\lln}{\left|}
\newcommand{\rrn}{\right|}
\newcommand{\lla}{\left\langle}
\newcommand{\rra}{\right\rangle}
\numberwithin{equation}{section}
\newtheorem{theorem}{Theorem}[section]
\newtheorem{lemma}{Lemma}[section]
\newtheorem{assumption}{Assumption}[section]
\newtheorem{proposition}{Proposition}[section]
\newtheorem{definition}{Definition}[section]
\newtheorem{notation}{Notation}[section]
\theoremstyle{remark}
\newtheorem{remark}{Remark}[section]
\newcommand{\bean}{\begin{eqnarray*}}
\newcommand{\eean}{\end{eqnarray*}}
\newcommand{\ben}{\begin{enumerate}}
\newcommand{\een}{\end{enumerate}}
\newcommand{\beq}{\begin{equation}}
\newcommand{\eeq}{\end{equation}}
\title[Mean Field Control with Regime Switching]{Mean field control and finite agent approximation for regime-switching jump diffusions}
\keywords{viscosity solutions, mean field control, master equation, propagation of chaos}
\subjclass[2020]{35D40, 35F21, 35Q91, 49L25}
\author[E. Bayraktar]{Erhan Bayraktar}
\address[E. Bayraktar]{Department of Mathematics\\
University of Michigan\\
%2854 East Hall, 530 Church Street,\\
\newline Ann Arbor, MI 48109\\
United States
}
\email{erhan@umich.edu}
\author[A. Cecchin]{Alekos Cecchin}
\address[A. Cecchin]{Department of Mathematics ``Tullio Levi-Civita''\\
University of Padova\\
\newline Padova, 35121\\
Italy
}
\email{alekos.cecchin@unipd.it}
\author[P. Chakraborty]{Prakash Chakraborty}
\address[P. Chakraborty]{ Harold and Inge Marcus Department of Industrial and Manufacturing Engineering \\
 The Pennsylvania State University \\
%2854 East Hall, 530 Church Street,\\
University Park, PA 16802
United States
}
\email{prakashc@psu.edu}
\date{\today}
\thanks{E. Bayraktar is supported in part by the National Science Foundation under grant DMS-2106556 and in part by the Susan M. Smith Professorship.}
\begin{document}

\begin{abstract}
We consider a jump-diffusion mean field control problem with regime switching in the state dynamics. The corresponding value function is characterized as the unique viscosity solution of a HJB master equation on the space of probability measures. Using this characterization, we prove that the value function, which is not regular, is the limit of a finite agent centralized optimal control problem as the number of agents go to infinity, with an explicit convergence rate. 
Assuming in addition  that the value function is smooth, we establish a quantitative propagation of chaos result for the optimal trajectory of agent states.
%In the process, we derive the convergence rate and a propagation of chaos result for the optimal trajectory of agent states.
\end{abstract}

\maketitle

\section{Introduction}\label{sec:intro}
Mean field control problems with regime switching state dynamics is a growing activity of research starting from the works of \cite{gen-yin, wang-zhang-12, wang-zhang-17, xiong} and most was recently considered by \cite{song}. This line of research activity is motivated from the need to control large identical interacting control systems where the interaction is mean-field and there exists a common noise affecting each system. Though Brownian common noise has been studied at length in the literature, a simpler example is a Markov chain representing the regime switches experienced by each system in the network. From a modeling perspective too such hybrid models have been shown to be extremely powerful \cite{xiong-9, xiong-7, xiong-19, zhou-yin}. In contrast to a common Markov switching, it is indeed possible to have a network where each system has its own switching mechanism. Such a model has been analyzed in \cite{wang-zhang-12, xiong}. However since there is already a mean field interaction between the systems it is sensible to have just one common random switching mechanism to incorporate the hybrid nature of the network.

In this work we model each individual system identically using a controlled jump-diffusion, where the jumps are independent and identically distributed (i.i.d.) and the drift, diffusion and jump intensity coefficients are modulated by the same Markov chain. The mean-field limit of this network is expected to be characterized by the distribution, conditioned on the history of the switching process, of the solution of a McKean Vlasov jump diffusion with regime switching coefficients. This mean field limit in an uncontrolled setup and without jumps has been analyzed in \cite{george-yin}. In some papers including \cite{gen-yin-2, gen-yin, xiong} the mean field interaction has been represented by the conditional expectation of the solution of the McKean Vlasov jump diffusion. In \cite{george-yin, xiong}, the stochastic maximum principle has been studied and obtained. In contrast, we use Bellman's dynamic programming principle approach to obtain a master equation as in \cite{bayraktar-cosso-pham-2} and \cite{pham-wei}. This equation can be studied by Lions' lifting of measure-dependent functions to appropriate functions on the space of random variables. Traditionally the Wasserstein space of measures and the $\cl^2$ space of random variables are used for this purpose. If now the solution is regular enough a classical interpretation of the master equation is used. Else, one relies on appropriate viscosity solution theory. Because of the jump terms present in our model, working with Lion's derivative is not enough to state the master equation in our case, and one needs to rely on the linear functional derivative alone. This has been mentioned and explored in \cite{soner} while analyzing a controlled McKean Vlasov jump diffusion without regime switches. In addition \cite{soner} provides a theory of viscosity solutions on the space of probability measures that does not rely on lifting to a Hilbert space. It is this notion of viscosity solution that we will use, modified of course to account for the regime switches, in addition to the metric (equivalent to weak topology) on the space of probability measures defined therein. 

The mean field control problem we analyze in the paper is the same as in \cite{soner}, and is slightly different from the usual one. Namely, in the finite-agent centralized optimization, the strategy of any agent is usually expected to depend on the private state and on the empirical measure of the system, while here it is independent of the private state. In other words, each agent uses as control the same stochastic process which is given by the central planner, and we believe that there are several situations which can be modeled in this way. As a consequence, the controls employed in the limiting mean field control problem are just deterministic functions of time.

In order to prove convergence of the finite state optimization to the mean field optimization, in the usual setup, there are several approaches. Let us mention \cite{djete-extended, djete-possamai-tan, lacker} where the problem is tackled probabilistically through compactness arguments. In addition convergence with rates can be obtained through FBSDE techniques, see \cite{carmona-delarue} and \cite{lauriere2022convergence}, the latter with interaction also through the law of the control; but these papers assume convexity in the measure argument. Another recent paper \cite{germain2022rate} establishes a convergence rate, assuming that the limit value function is smooth (which should hold e.g. under convexity).

In this present work, we prove convergence of the value functions with a convergence rate without assuming convexity, nor that the value function is smooth. Instead, we make use of the viscosity solution characterization of the optimal value function, which is in general not differentiable.  
 This is most related to the recent papers \cite{cecchin} and \cite{gangbo-mayorga-swiech}. In \cite{gangbo-mayorga-swiech}, the viscosity solutions of HJB equations in finite-agent deterministic or stochastic optimal control problems are shown to converge to that of a limiting HJB equation in the space of probability measures. The latter equation is interpreted through Lions' lifting in the $\cl^2$ sense. However convergence rates are absent in \cite{gangbo-mayorga-swiech}. But since we rely on our particular viscosity solution structure, we can  adopt the ideas in \cite{cecchin} even though the problem addressed there is in the space of probability measures with finite support. We employ a doubling of variables argument, using the distance-like function introduced in \cite{soner}.

 %\red{As a consequence we derive similar convergence rates as in \cite{cecchin} for our propagation of chaos results.} 

A recent preprint \cite{carda-souga-1} provides a rate for the convergence of the value function, under general assumptions. As explained above, the mean field control problem we consider here is slightly different and thus the proof of convergence is completely different. As a consequence, we obtain a better rate than in \cite{carda-souga-1} (see Remark \ref{remark:carda-souga}). We mention, however, that we impose a structural assumption on the coefficients, in order to apply the theory of \cite{soner}. 

We also provide, for completeness,  a quantitative propagation of chaos for the convergence of the optimal trajectories, but, to obtain this result, we suppose that the limit value function is smooth, as in \cite{germain2022rate}. We mention, in this respect, a recent preprint \cite{carda-souga-2} where the authors establish a quantitative propagation of chaos, without assuming regularity of the limit value function: they show that the value function is $C^1$ in an open and dense subset of the space of probability measures and thus prove convergence if the initial distribution belongs to that set.

The rest of the paper is organized as follows. In Section~\ref{sec:intro}, we mention the mean field control problem as motivation to introduce associated assumptions, the state space under consideration, along with the particular metric on $\cp(\R)$ borrowed from \cite{soner} used in our paper. These notions will be used throughout the rest of this article. In Section~\ref{sec:MFC}, we study the mean field control problem in detail. In particular, we obtain an HJB equation, define a viscosity solution theory suitable for our purposes and prove that the value function is the unique viscosity solution to that HJB. Next in Section~\ref{sec:finite-agent}, we introduce the finite agent centralized control problem and prove the main convergence results: we  show using viscosity solutions that the value function in the finite agent control problem is uniformly approximated by that in the mean field control problem. Finally, in Section~\ref{sec:poc} we prove under additional assumptions, a propagation of chaos result showing that the optimal trajectories are uniformly close as well.
 
\section{Preliminaries}\label{sec:prelim}
\subsection{The control problem}
Consider a complete probability space $(\oom, \cf, ({\cf}_s)_{s \in [0,T]}, \PP)$ on which is defined a Brownian motion $(W_s)_{s \in [0,T]}$, where $T>0$ is an arbitrary fixed time horizon. Let $\al$ be a continuous time Markov chain with finite state space $\cs=\{1,\ldots,s_0\}$ and generator $Q = (q_{ij})_{1\leq i,j\leq s_0}$. Let $\cp(\R)$ denote the class of probability measures on $\R$. For $(t,\rho) \in [0,T] \times \mathcal{P}(\R)$ we consider the following controlled McKean-Vlasov stochastic differential equation with initial condition $\cl(X_t\vert \cf_{t-}^{\al}) = \rho$ and $\al_t=i_0 \in \cs$:
\beq\label{eq:SDE}
dX_s = b(s, X_s, \mu_s, v_s, \al_{s-})ds + \si(s, X_s, \mu_s, v_s, \al_{s-}) dB_s + dJ_s, \qquad t \leq s \leq T,
\eeq
where $\mu_s = \cl(X_s \vert \cf_{s-}^{\al})$, $\cf^{\al}$ is the filtration generated by the Markov chain $\al$, $J_s$ is a purely discontinuous process with controlled intensity $\la(s, X_s, \mu_s, v_s, \al_{s-})$ and the jump sizes are i.i.d. from some distribution $\gamma \in \mathcal{P}(\R)$. {Furthermore, $v_s:= v(s, \mu_s, \al_{s-})$ is a deterministic feedback control of time $s$, conditional law $\mu_s$ and regime state $\al_{s-}$ taking values in a prescribed {Polish space} $A$. Denote $\ca$ to be this class of admissible controls.} The solution to \eqref{eq:SDE} depends on $t$, $\rho$, $i_0$ and $v$. However for ease in presentation we will sometimes omit these and the solution will be denoted just by $X^v$ or just $X$, when the dependence on the initial conditions and the control is clear from the context. 

We then consider the value function
\beq\label{eq:value}
V(t, \rho, i_0)
:= \inf_{v \in \ca} \E \lc \int_t^T f(s, X_s, \mu_s, v_s, \al_{s-}) ds + h(T, X_T, \mu_T, \al_T) \rc,
\eeq
for given functions $f$ and $h$, and where $v_s \equiv v(s, )$

\begin{assumption}
The space of probability measures is endowed with the weak* topology $\si(\cp(\R), \cac_b(\R))$ where $\cac_b(\R)$ is the space of continuous and bounded functions in $\R$. The weak* convergence $\mu_n \to \mu$ is equivalent to: $\langle \mu_n, f \rangle$ converges to $\langle \mu, f \rangle$ for every $f \in \cac_b(\R)$. Whenever required we can use any of the metrics $d(\mu, \mu')$ on $\cp(\R)$ which permits it to be topologically equivalent to the weak* topology. In particular, in the sequel we will use the metric $d(\mu, \mu') = \sum_j c_j |\langle \mu - \nu, f_j \rangle|$, when restricted to a suitable compact set $\mathscr{O} \subset \cp(\R)$, where $\{f_j\}_j$ is a class of polynomials containing all monomials (see later for more details). If needed, $\cs$ is endowed with the metric $d_{\cs}$ satisfying $d_{\cs}(i_0, j_0) = \mathbf{1}_{\{i_o \neq j_0\}}$.
\end{assumption}

\begin{assumption}\label{hyp:main}
There exist constants $C_0$
\begin{enumerate}
\item[(i)] For any $s \in [0,T]$, $x \in \R$, $\mu \in \cp(\R)$, $v \in A$, $i_0 \in \cs$,
$$
|b(s,\mu,v,i_0)| + |\si(s,\mu,v,i_0)| + |\la(s,\mu,v,i_0)| \leq C_0
$$

\item[(ii)] There exists a finite set $\ci \in \N$ such that for any $\mu, \mu' \in \cp(\R)$, $t,s \in [0,T]$, $x, y \in \R$, $i_0 \in \cs$
\begin{multline*}
|b(t,\mu,v,i_0) - b(s,\mu',v,i_0)| + |\si(t,\mu,v,i_0) - \si(s,\mu',v,i_0)| \\ + |\la(t,\mu,v,i_0) - \la(s,\mu',v,i_0)|   \leq \ka_0 \lp |t-s|  + \sum_{k \in \ci} \lvert \lla \mu-\mu', x^k \rra \rvert \rp.
\end{multline*}

\item[(iii)] $\ga$ has $\delta$-exponential moment:
$$
\int_{\R} \exp(\delta |x|) \ga(dx) < \infty. 
$$

\item[(iv)] $f$ and $h$ are bounded. Furthermore there exists a finite set $\ci \in \N$ such that for any $\mu, \mu' \in \cp(\R)$, $t,s \in [0,T]$ and $x, y \in \R$:
\begin{multline*}
\lln f(t, x, \mu, v, i_0) - f(s, y, \mu', v, i_0) \rrn + \lln h(t, x, \mu, i_0) - h(s, y, \mu', i_0) \rrn\\
\leq \ka_1 \lp |t-s| + |x-y| + \sum_{k \in \ci} \lln \lla \mu - \mu', x^k \rra \rrn \rp.
\end{multline*}

\item[(v)] All the functions $b, \sigma, \lambda, f, h$ are Lipschitz-continuous in the measure argument for the 2-Wasserstein distance $W_2$.

%The function $f(t, x, \mu, v, i_0)$ is continuous in $(t,\mu)$ uniformly over all other arguments. \hb{(this can be relaxed as in \cite{soner})}. The terminal cost $h(t,x,\mu,i)$ is continuous in $(t,\mu)$ for each $i \in \cs$.

\end{enumerate}

\end{assumption}

\begin{remark}
We should note that although the 2-Wasserstein distance and the distance $d$ are topologically equivalent, they are not strongly equivalent. We need both of (ii) and (v) above for our main result, Theorem~\ref{thm:1}. See e.g. Propositions~\ref{prop:Lipsct} and \ref{prop:Was-Lip}.
\end{remark}

\subsection{State space}
\begin{notation}
Since the Brownian motion has exponential moments, the solution to \eqref{eq:SDE} also has exponential moments owing to boundedness of the coefficients in Assumption~\ref{hyp:main}.  
%(\hb{if we relax Assumption~\ref{hyp:main}(i) to include linear growth then the solution to \eqref{eq:SDE} doesn't have exponential moments anymore and we cannot perform analyses as in \cite{soner}}). 
As in \cite{soner} we also consider the optimal control problem in $\co \times \cs$,  where $\co := [0,T) \times \cm$ and $\cm$  is the set of probability measures with $\der$-exponential moments. Denote $\overline{\co} := [0,T] \times \cm$. Let 
\beq\label{eq:e^der}
e_{\der} (x) := \exp \lp \der \lc \sqrt{x^2+1} -1 \rc \rp, \quad x \in \R,
\eeq
where $\der$ is as in Assumption~\ref{hyp:main}. For $b > 0$ we denote
$$
\cm_b := \lcl \mu \in \cp(\R) : \lla \mu, e_{\der} \rra \leq b \rcl.
$$
For $M \in \N$, 
$$
\co_M = \lcl (t, \mu) \in [0,T)\times \cp(\R): \lla \mu, e_{\der} \rra \leq {M e^{K^{\ast}t}} \rcl,
$$
where 
$$
K^{\ast} = \dfrac{\der C_0}{2} \lp 2+ C_0 + \der C_0 \rp + C_0 \lp \int_{\R} e^{\der |x|} \ga(dx) - 1 \rp.
$$
Also denote
$$
\overline{\co}_M = \lcl (t, \mu) \in [0,T] \times \cp(\R): \lla \mu, e_{\der} \rra \leq {M e^{K^{\ast}t}} \rcl,
$$
and $\overline{\co} = \cup_{M=1}^{\infty} \overline{\co}_M$.
\end{notation}

\begin{remark}\label{rem:mu=nu}
If $\mu, \nu \in \cm$, then
$$
\mu = \nu \qquad \Leftrightarrow \qquad \lla \mu - \nu, x^k \rra = 0 \quad \forall k \in \N.
$$
\end{remark}
\begin{definition}
A function  $\vp:\cp(\R) \to \R$ is said to have a linear functional derivative at $\mu \in \cp(\R)$ if there exists a function $D_m \vp: \cp(\R) \times \R \to \R$ such that for every $\mu, \mu' \in \cp(\R)$ the following relation holds
$$
\vp(\mu) - \vp(\mu') = \int_0^1 \int_{\R} D_m \vp(r \mu + (1-r)\mu', x) (\mu - \mu') (dx) dr.
$$
The function $D_{m^2}^2 \vp: \cp(\R) \times \R \times \R \to \R$ stands for the second linear functional derivative of $\vp$ at $\mu$ and is defined as the linear derivative of $D_m \vp$.
\end{definition}

\begin{remark}
Consider the linear function $\vp(\mu) = \lla \mu, f \rra$ for some $f : \R \to \R$. Then $D_m \vp(\mu, x) = f(x)$ for any $(\mu, x) \in \cp(\R) \times \R$. 
\end{remark}

In the following we provide a few details on the particular distance $d(\mu, \nu)$ we consider in this article. We consider a slight variation of the distance introduced in \cite{soner}, to which we refer for more details. We start off with a set of polynomials against which we will integrate our measures.
\begin{definition}\label{def:Theta}
Let $\Theta$ be the minimal set of polynomials such that
\begin{enumerate}
\item[(i)] for any $g \in \Theta$, $g^{(k)} \in \Theta$ for all $k=0, \ldots, \textnormal{deg}(g)$;
\item[(ii)] for any $g \in \Theta$, $\sum_{k=1}^{\textnormal{deg}(g)}m_k g^{(k)} \in \Theta$ where $m_k = \frac{1}{k!} \int_{\R} y^{k} \ga(dy)$; 
\item[(iii)] for any $g\in\Theta$, $(g')^2 \in\Theta$. 
\item[(iv)] all monomials $\{x^k\}_{k=1}^{\infty}$ is contained in $\Theta$. 
\end{enumerate}
\end{definition}
It can be shown that $\Theta$ is countable. Let $\{f_j\}_{j=1}^{\infty}$ be an enumeration of $\Theta$. Fix a $b > 0$ and recall $\cm_b$ defined earlier. 
For every $f_j \in \Theta$, consider the finite index set $I_j$ so that the set 
$\{f_i : i\in I_j\}$ is the set of polynomials obtained from $f_j$ by one of the first three above operations. Since measures $\cm_b$ have bounded exponential moments, 
\[
s_j (b) := 1+ \sup_{\mu\in \cm b} \langle \mu , f_j\rangle ^2  < \infty \qquad \forall j, 
\] 
and thus set 
\beq 
c_j(b) := \left( \sum_{k\in I_j} 2^k \right)^{-1} 
\left( \sum_{k\in I_j} s_k(b) \right)^{-2}.
\eeq

Then we have the following result the proof of which can be found in \cite{soner}.
\begin{lemma}\label{lem:metric-d}
With the above choice $\{ c_j(b) \}_{j=1}^{\infty}$
$$
d(\mu, \nu; b) = \sum_{j=1}^{\infty} c_j(b) |\lla \mu- \nu, f_j \rra|,
$$
defines a metric on $\cm_b$. Furthermore a sequence $\{\mu_n\}_{n \in \N}$ in $\cm_b$ converges weakly to $\mu \in \cm_b$ if and only if $\lim_{n \to \infty} d(\mu_n, \mu; b) =0$.
\end{lemma}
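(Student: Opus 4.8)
The plan is to construct weights $c_j(b)$ making the series converge uniformly on $\cm_b$, verify the metric axioms, and then prove the two directions of the weak-convergence equivalence, with the uniform exponential-moment bound doing the heavy lifting in both.

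First I would record the key domination estimate: since each $f_j$ is a polynomial and $e_\der(x) = \exp(\der[\sqrt{x^2+1}-1])$ grows faster than any polynomial, the constant $C_j := \sup_{x \in \R} |f_j(x)|/e_\der(x)$ is finite. Hence for every $\mu \in \cm_b$ we have $|\lla \mu, f_j \rra| \le C_j \lla \mu, e_\der \rra \le C_j b$, so $|\lla \mu - \nu, f_j \rra| \le 2 C_j b$ for all $\mu, \nu \in \cm_b$. Choosing $c_j(b) := 2^{-j}/(1 + 2 C_j b) > 0$ bounds each summand by $2^{-j}$ uniformly over $\cm_b$, so $d(\cdot,\cdot;b)$ is finite-valued. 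Symmetry and the triangle inequality hold termwise, and non-negativity is clear. For the identity of indiscernibles, $d(\mu,\nu;b)=0$ forces $\lla \mu - \nu, f_j \rra = 0$ for every $j$ (as all $c_j(b)>0$); since $\Theta$ contains every monomial by Definition~\ref{def:Theta}(iii), this gives $\lla \mu - \nu, x^k \rra = 0$ for all $k$, and Remark~\ref{rem:mu=nu} yields $\mu = \nu$ (note $\cm_b \subset \cm$). Thus $d(\cdot,\cdot;b)$ is a metric.

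For the equivalence with weak convergence, the crucial analytic input is that the bound $\sup_n \lla \mu_n, e_\der \rra \le b$ makes every polynomial uniformly integrable against $\{\mu_n\}$, since $\int_{|x|>R} |f_j|\, d\mu_n \le C_j \int_{|x|>R} e_\der \, d\mu_n$ is small uniformly in $n$ as $R \to \infty$. If $\mu_n \to \mu$ weakly, this upgrades weak convergence to $\lla \mu_n, f_j \rra \to \lla \mu, f_j \rra$ for each $j$, so every term of the series tends to $0$; dominated by the summable $n$-independent bound $2^{-j}$, the series itself tends to $0$, i.e. $d(\mu_n,\mu;b) \to 0$. Conversely, if $d(\mu_n,\mu;b)\to 0$ then $\lla \mu_n - \mu, f_j \rra \to 0$ for every $j$, so all moments converge. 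The uniform bound gives tightness (the sublevel sets of $e_\der$ are compact), hence relative weak compactness by Prokhorov; any weak limit point $\nu$ lies in $\cm_b$ by lower semicontinuity of $\mu \mapsto \lla \mu, e_\der \rra$, and uniform integrability lets me pass moments to the limit, giving $\lla \nu, x^k \rra = \lla \mu, x^k \rra$ for all $k$. Since its finite exponential moment makes $\mu$ moment-determinate (Carleman's condition), $\nu = \mu$; as every subsequential weak limit equals $\mu$, the whole sequence converges weakly to $\mu$.

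I expect the main obstacle to be the careful justification of exchanging weak convergence and moment convergence in both directions — that is, the uniform integrability of unbounded polynomials under the single exponential-moment bound — together with the appeal to moment-determinacy of the limit. Once these are in place, the metric axioms and the interchange of limit and summation are routine.
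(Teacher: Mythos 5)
Your proof is correct, and since the paper itself gives no argument for this lemma (it defers entirely to the reference \cite{soner}), your write-up is a legitimate self-contained reconstruction along the standard lines: the domination $|f_j|\le C_j e_{\der}$ makes the weights $c_j(b)=2^{-j}/(1+2C_jb)$ work, identity of indiscernibles follows from Definition~\ref{def:Theta}(iii) together with Remark~\ref{rem:mu=nu}, and the two directions of the equivalence are exactly uniform integrability of polynomials under the uniform exponential-moment bound (one way) and tightness plus Fr\'echet--Shohat/moment-determinacy (the other way). One caveat worth flagging: the lemma as stated only asserts existence of \emph{some} positive sequence, and your choice proves that; but the paper immediately afterwards lists three further properties of the $c_j(b)$ that it uses later --- notably $c_j(b)\le c_i(b)$ whenever $f_i$ is obtained from $f_j$ by the operations of Definition~\ref{def:Theta}, and $\sum_j c_j(b)\lla \mu,f_j\rra^2<\infty$ on $\cm_b$ --- and your particular weights do not automatically deliver these (e.g.\ $c_j(b)\lla\mu,f_j\rra^2\le 2^{-j}(C_jb)^2/(1+2C_jb)$ need not be summable since $C_j$ grows with $j$). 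The construction in \cite{soner} chooses the weights more carefully, essentially normalizing by $\sup_{\mu\in\cm_b}\lla\mu,|f_j|\rra^2$ and enforcing the monotonicity along the closure operations, so if you intend your metric to be the one used in the rest of the paper you would need to adjust the weights accordingly; for the lemma in isolation your argument is complete.
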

As a consequence of the definition, we have the following facts, which will be useful in the sequel. Their proofs can be found in \cite{soner}.
\begin{enumerate}
\item $c_j(b) \leq 2^{-j}$ and consequently $\sum_j c_j(b) \leq 1$;
\item for each $f_i \in \Theta$ with $i\in I_j$,  
%obtained from $f_j \in \Theta$ by a combination of the first two operations in Definition~\ref{def:Theta}, 
$c_j(b) \leq c_i(b)$. 
\item $\sum_j c_j(b) {\lla \mu, f_j \rra}^2 \leq 1$, for all $\mu \in \cm_b$.
\end{enumerate}
\begin{remark}\label{rem:metric-d}
In the following we will fix an $M \in \N$ and run our analyses on $\co_M$. To that effect we fix $b=Me^{K^{\ast}T}$ and omit the dependence of $c_j$ and $d$ on $b$.
\end{remark}

\begin{notation}
Associated with each pair of states $(i_0, j_0) \in \cs \times \cs$, $i_0 \neq j_0$ of the Markov chain $\al$ we denote
$$
[M_{i_0 j_0}^{\al}](t) = \sum_{0 \leq s \leq t} \mathbf{1}_{\{ \al_{s-} = i_0 \}}\mathbf{1}_{\{ \al_{s} = j_0 \}}, 
\quad
\lla M_{i_0 j_0}^{\al} \rra (t) = \int_0^t q_{i_0 j_0} \mathbf{1}_{\{ \al_{s-} = i_0 \}} ds,
$$ 
Finally for $t \in [0,T]$ the process $M_{i_0 j_0}^{\al}(t)$ is defined by
$$
M_{i_0 j_0}^{\al}(t) = [M_{i_0 j_0}^{\al}](t) - \lla M_{i_0 j_0}^{\al} \rra (t).
$$ 
It can be shown that this is a purely discontinuous and square integrable martingale with respect to the complete filtration $\cf_t$. See e.g. \cite{george-yin}. 
\end{notation}
As necessary we will use the following notation to measure distance between states in the state space $\cs$ of the Markov chain $\al$. 
\begin{notation}
For every $i_0, j_0 \in \cs$ 
$\tilde d(i_0, j_0) = \mathbf{1}_{\{i_0 \neq j_0\}}$.
\end{notation}

\section{Mean field control problem}\label{sec:MFC}
\subsection{Dynamic Programming}
\begin{lemma}
The following dynamic programming principle holds:
\beq\label{eq:DPP}
V(t,\rho,i_0) = \inf_{v \in \ca} \E \lc \int_t^{\theta} f(s, X_s, \mu_s, v_s, \al_{s-}) ds + V(\theta, \mu_{\theta}, \al_{\theta}) \rc, \quad \forall \theta \in [t, T],
\eeq
%where $\ct_{t, T}^{\al}$ denotes the set of $\cf_t$ stopping times valued in $[t, T]$. 
\end{lemma}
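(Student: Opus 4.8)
The plan is to establish the dynamic programming principle (DPP) for the value function $V$ in \eqref{eq:DPP} via the standard two-sided inequality argument, adapted to the McKean-Vlasov setting with regime switching. The key structural fact that makes this work is the flow property of the conditional law: if we start at time $t$ with initial law $\rho$ and regime $i_0$, run the controlled dynamics \eqref{eq:SDE} up to an intermediate time $\theta \in [t,T]$, then the pair $(\mu_\theta, \al_\theta)$ serves as a valid (random) initial condition for the control problem restarted at time $\theta$. First I would fix $\theta \in [t,T]$ and write $J(t,\rho,i_0;v)$ for the cost functional inside the infimum in \eqref{eq:value}, so that $V(t,\rho,i_0) = \inf_{v \in \ca} J(t,\rho,i_0;v)$. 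I would split the running-cost integral at $\theta$ as $\int_t^\theta + \int_\theta^T$ and use the tower property with respect to $\cf_\theta^{\al}$ (equivalently, conditioning on $(\mu_\theta, \al_\theta)$) to factor the tail contribution.

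For the inequality $V(t,\rho,i_0) \leq \text{(right-hand side)}$, I would take an arbitrary control $v \in \ca$, condition the tail cost $\E\lc \int_\theta^T f \, ds + h(T,X_T,\mu_T,\al_T) \mid \cf_\theta^{\al} \rc$, and recognize it as the cost of $v$ restricted to $[\theta,T]$ for the subproblem started at $(\mu_\theta,\al_\theta)$; this conditional cost is bounded below by $V(\theta,\mu_\theta,\al_\theta)$ by definition of the value function as an infimum. Taking expectations and then the infimum over $v$ on the left gives one direction. For the reverse inequality, the standard route is a measurable-selection / pasting argument: given $\ep > 0$, choose a control that is $\ep$-optimal on $[t,\theta]$, and for each realization of $(\mu_\theta, \al_\theta)$ splice in an $\ep$-optimal control for the subproblem on $[\theta,T]$; concatenating these yields an admissible control whose cost is within $O(\ep)$ of the right-hand side, and letting $\ep \to 0$ closes the gap.

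The main obstacle will be the reverse inequality, specifically the measurable selection and the admissibility of the pasted control. One must verify that the concatenated control remains in the admissible class $\ca$ of deterministic feedback controls $v_s = v(s,\mu_s,\al_{s-})$, which is somewhat delicate because the pasted piece depends on the intermediate state $(\mu_\theta,\al_\theta)$; the feedback structure must be preserved and the selection made measurably in the initial data of the subproblem. A related technical point is ensuring that the conditional law flow $\mu_\theta = \cl(X_\theta \mid \cf_\theta^{\al})$ is genuinely well-defined as a Markovian state, so that restarting the dynamics from $(\mu_\theta,\al_\theta)$ reproduces the same conditional-law evolution on $[\theta,T]$; this relies on the well-posedness and flow property of \eqref{eq:SDE}, which in turn follows from the boundedness and Lipschitz estimates in Assumption~\ref{hyp:main} together with the exponential-moment control recorded in the state space $\co_M$. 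I expect the measurability and regularity needed to make the splicing rigorous to be the part requiring the most care, while the conditioning and tower-property bookkeeping should be routine once the flow property is in hand.
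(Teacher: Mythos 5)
Your plan is correct in outline, but it is a genuinely different (and much heavier) route than the one the paper takes. The paper's entire proof is the observation that the McKean--Vlasov problem, viewed in the lifted variables, is a \emph{deterministic} control problem: the state of the optimization is the pair $(\mu_s,\al_s)$, the admissible controls in $\ca$ are deterministic feedback maps $v(s,\mu,\al_{s-})$, and conditionally on the path of the finite-state chain $\al$ the flow $s\mapsto\mu_s$ is a single deterministic trajectory in $\cp(\R)$. The DPP then follows from classical results (the paper cites Fleming--Soner) with no measurable-selection machinery. Your two-sided argument --- tower property for the easy inequality, $\ep$-optimal pasting for the reverse one --- is the standard stochastic-control proof and would work, but the obstacle you single out as the hardest part (measurable selection and admissibility of the spliced control) is precisely what the deterministic framing dissolves: at time $\theta$ the pair $(\mu_\theta,\al_\theta)$ ranges, given $\cf_\theta^{\al}$, over a single measure and finitely many regime states, so choosing an $\ep$-optimal continuation is a selection over a finite set along a deterministic flow, and concatenating two deterministic feedback maps at time $\theta$ produces another map of the same form. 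One caveat your plan should still address if carried out in full generality: the pasted control a priori depends on $(\mu_\theta,\al_\theta)$ and hence on the chain's history rather than only on the current state $(\mu_s,\al_{s-})$ for $s>\theta$, so strictly it need not lie in the feedback class $\ca$ as defined; this is exactly the kind of bookkeeping the deterministic/piecewise-deterministic viewpoint lets you avoid, since given the chain's path there is only one trajectory to follow. In short, your approach buys generality (it would survive a genuine common noise with a continuum of scenarios), while the paper's buys brevity by exploiting the special structure.
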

\begin{proof}
Since the McKean-Vlasov control problem considered here is deterministic, the dynamic programming principle follows from classical results. See, for instance \cite{fleming-soner}. %Indeed for any $\ep>0$ there exists a function $f^{\ep}$ that $\ep$-approximates $f$, such that $f^{\ep} is differentiable in the space component and the derivative is Lipschitz continuous with the same Lipschitz constant as $f$. Then \cite[Theorem~4.6]{george-yin} 
%\hb{I do not know if there is a simpler proof than the one in \cite{pham-wei}. For that one needs to consider the probability spaces to be of a product form. Is there a work around?}
\end{proof}
\begin{notation}
For a given class of functions ${u} = (u(t,\mu,i_0,x))_{i_0 \in \cs}$, the operator $\cl_t^{\mu,i_0,v}$ acts on the $x$ and $i_0$ variables and is given by
\begin{multline}\label{eq:operator}
\cl_t^{\mu, i_0, v}[{u}](x) := b(t,x,\mu,v,i_0)\dfrac{\partial u}{\partial x}(t, \mu, i_0, x) + \dfrac{1}{2} \si^2(t,x,\mu,v,i_0) \dfrac{\partial^2 u}{\partial x^2}(t,\mu, i_0, x)\\ + \la(t, x, \mu, i_0, v) \int_{\R} \lp u(t, \mu, i_0, x+y) - u(t, \mu, i_0, x) \rp \ga(dy)%\\ + \sum_{j \in \cs} q_{i,j} \lp u(t,\mu,j,x) - u(t, \mu, i_0, x) \rp
\end{multline}
\end{notation}

\noindent
From \eqref{eq:DPP} one obtains the following dynamic programming equation: %\hb{the following has changed a little, check for repurcussions}
\beq\label{eq:HJB}
-\partial_t V(t, \mu, i_0) + \sup_{v \in A} \ch^v(t, \mu, i_0, {D_m V}) - \sum_{j_0 \in \cs} q_{i_0,j_0} \lp V(t,\mu,j_0) - V(t, \mu, i_0) \rp= 0,
\eeq
where 
\beq\label{eq:ch^v}
\ch^v(t, \mu, i_0, {D_m V}) = -\lla \mu, f(t, \cdot, \mu, v, i_0) + \cl_t^{\mu, i_0, v}[{D_m V}] \rra.
\eeq

\begin{notation}
${\ch}:=\sup_{v \in A}{\ch}^v$.
\end{notation}

\noindent
The following lemma from \cite{soner} is modified for our purposes but its proof is similar.
\begin{lemma}
Under Assumption~\ref{hyp:main} for any $M \in \N$ we have
$$
(t, \mu, i_0) \in \co_M \times \cs \Rightarrow (u, \cl(X_u^{t,\mu,i_0,v} \vert \cf_{u-}^{\al}), \al_u) \in \co_M \times \cs, \quad \forall (u,v) \in [t,T] \times \ca.
$$
\end{lemma}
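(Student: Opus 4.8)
The plan is to reduce the statement to a single conditional exponential-moment bound and then to a pointwise estimate on the generator \eqref{eq:operator}. Since $(u,\mu_u)\in\co_M$ means exactly $\lla \mu_u, e_{\der}\rra \le M e^{K^{\ast}u}$, and since $\mu_u = \cl(X_u^{t,\mu,i_0,v}\vert\cf_{u-}^{\al})$ gives $\lla \mu_u, e_{\der}\rra = \E[e_{\der}(X_u)\vert \cf_{u-}^{\al}]$ with initial datum $\lla\mu,e_{\der}\rra = \E[e_{\der}(X_t)\vert\cf_{t-}^{\al}] \le M e^{K^{\ast}t}$, it suffices to show the conditional exponential moment grows at rate at most $K^{\ast}$. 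The $\cs$-component of the conclusion is automatic because $\al_u\in\cs$ always.

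First I would establish the key pointwise inequality $\cl_s^{\mu,i_0,v}[e_{\der}](x)\le K^{\ast}\,e_{\der}(x)$, uniformly in $(s,\mu,v,i_0)$. Writing $g(x)=\sqrt{x^2+1}-1$, one has $|g'|\le 1$ and $0< g''=(x^2+1)^{-3/2}\le 1$, whence $|e_{\der}'|\le \der\,e_{\der}$ and $e_{\der}''=(\der g''+\der^2 (g')^2)e_{\der}\le (\der+\der^2)e_{\der}$. Using $|b|\le C_0$ and $|\si|\le C_0$ from Assumption~\ref{hyp:main}(i), the drift and diffusion parts of \eqref{eq:operator} are bounded by $C_0\der\,e_{\der}$ and $\tfrac12 C_0^2(\der+\der^2)e_{\der}$. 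For the jump part, the $1$-Lipschitz property of $g$ gives $g(x+y)\le g(x)+|y|$, so $e_{\der}(x+y)\le e_{\der}(x)e^{\der|y|}$ and
$$
\la\int_{\R}\big(e_{\der}(x+y)-e_{\der}(x)\big)\ga(dy)\le C_0\Big(\int_{\R}e^{\der|y|}\ga(dy)-1\Big)e_{\der}(x),
$$
which is finite by Assumption~\ref{hyp:main}(iii). Summing the three contributions reproduces exactly the constant $K^{\ast}$.

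With this in hand I would apply Itô's formula for jump diffusions to $e_{\der}(X_s)$, working conditionally on the full trajectory of the chain: once the path of $\al$ is frozen, $X$ is a time-inhomogeneous jump diffusion with (now deterministic) coefficients evaluated along $\al$, and its dynamics split into the drift $\cl_s^{\mu_s,\al_{s-},v_s}[e_{\der}](X_s)\,ds$, a Brownian integral and a compensated-jump martingale. Writing $\cf^{\al}=\si(\al_r:r\in[0,T])$ and $\phi(s)=\E[e_{\der}(X_s)\vert\cf^{\al}]$, taking $\E[\cdot\vert\cf^{\al}]$ annihilates the two martingale parts and, via the pointwise bound applied uniformly over the $\cf^{\al}$-measurable values $\al_{s-}\in\cs$, yields $\phi'(s)\le K^{\ast}\phi(s)$; Gronwall then gives $\phi(u)\le e^{K^{\ast}(u-t)}\phi(t)$. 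Since $X_t$ depends on $\al$ only through its history before $t$ while the driving Brownian motion and jumps are independent of $\al$, one has $\phi(t)=\lla\mu,e_{\der}\rra\le Me^{K^{\ast}t}$, so $\phi(u)\le Me^{K^{\ast}u}$; the tower property over $\cf_{u-}^{\al}\subset\cf^{\al}$ then yields $\lla\mu_u,e_{\der}\rra\le Me^{K^{\ast}u}$, i.e. $(u,\mu_u)\in\co_M$.

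The main obstacle is the rigorous justification of this last step, since $e_{\der}'$ and the jump increments are unbounded, so the Brownian integral $\int \si\, e_{\der}'(X_r)\,dB_r$ and the compensated-jump term are a priori only local martingales. I would handle this by localizing with the exit times $\tau_n=\inf\{s:|X_s|\ge n\}$, applying the computation to the stopped process $X_{s\wedge\tau_n}$ (for which all terms are genuine martingales with vanishing conditional expectation), and passing to the limit $n\to\infty$ via monotone convergence together with the finiteness of the exponential moments of the solution noted in the Notation block, itself a consequence of the boundedness of the coefficients in Assumption~\ref{hyp:main}. This legitimizes the conditional expectations and the interchange of limit and integral, and closes the Gronwall argument.
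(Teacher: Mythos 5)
Your proof is correct and is essentially the argument the paper intends: the paper gives no proof here, deferring to the analogous lemma in \cite{soner}, whose method is exactly your reduction to the pointwise generator bound $\cl_s^{\mu,i_0,v}[e_{\der}]\le K^{\ast}e_{\der}$ followed by It\^o's formula conditionally on the chain, localization, and Gronwall. Your term-by-term estimate recovers the paper's constant $K^{\ast}=\frac{\der C_0}{2}(2+C_0+\der C_0)+C_0(\int_{\R}e^{\der|x|}\ga(dx)-1)$ exactly, which confirms you have reconstructed the intended proof (the only cosmetic quibble being that the final limit $n\to\infty$ is a Fatou rather than a monotone convergence step).
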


\subsection{Viscosity solutions and test functions}
The viscosity sub- and super-solutions are defined similar to \cite{soner}. We begin with the class of test functions. 
\begin{definition}
A cylindrical function is a map of the form $(t, \mu,i_0 ) \mapsto  F(t, \langle \mu , f \rangle, i_0) $ for some function $f : \R  \rightarrow  \R$  and $F : [0,T]\times \R \times \cs  \rightarrow  \R$. This function is called cylindrical polynomial if $f$ is a polynomial, and $F(\cdot, \cdot, i_0)$ is continuously differentiable for all $i_0$.
\end{definition}

\begin{definition}
For $E \subseteq  \co$, a viscosity test function on $E \times \cs$ is a function of the form
$$
\varphi (t,\mu,i_0 ) = \sum_{j=1}^{\infty} \varphi_j(t,\mu,i_0), \quad (t,\mu, i_0) \in  E\times \cs,
$$
where $\{ \varphi_j\}_j$ is a sequence of cylindrical polynomials satisfying:
\begin{itemize}
\item[(i)] $\{\phi_j^{i_0} :=\phi_j(\cdot, \cdot, i_0)\}_j$ are absolutely convergent at every $(t, \mu )$ for every $i_0 \in \cs$,
\item[(ii)] for every $j \in  \N$ and $i_0 \in \cs$,
$$
 \lim_{M \to \infty} \sum_{j=M}^{\infty} \sup_{(t,\mu) \in E}\sum_{k=0}^{\textnormal{deg} (D_m \varphi_j^{i_0} )}  \lln \lla \mu ,(D_m \varphi_j^{i_0})^{(k)} \rra \rrn =0.
$$
\end{itemize}
We let $\Phi_{E\times \cs}$ be the set of all viscosity test functions on $E\times \cs$.
\end{definition}

\begin{definition}
For $E \subseteq \bar{\co}$ and $(t, \mu, i_0) \in E \times \cs$ with $t < T$, the superjet of $u$ at $(t, \mu, i_0)$ is given by:
\begin{multline*}
J_{E \times \cs}^{1,+} u(t, \mu, i_0) := \lcl \lp \partial_t \varphi(t, \mu, i_0) , {\lp D_m \varphi(t, \mu, k, \cdot) \rp}_{k \in \cs} \rp \right.\\ 
\left.\bigg\vert \varphi \in \Phi_{E \times \cs}, (u-\varphi)(t, \mu, i_0) = \max_{E}(u-\varphi)(\cdot, \cdot, i_0) \rcl.
\end{multline*}
The subjet of $u$ at $(t, \mu, i_0)$ is defined as $J_{E\times \cs}^{1,-} u(t, \mu, i_0) := - J_{E \times \cs}^{1,+}(-u)(t, \mu, i_0)$.
\end{definition}

\begin{definition}
For a subspace $E \subseteq \bar{\co}$, the upper semi-continuous envelope of $u$ on $E \times \cs$ is defined by
$$
u_{E \times \cs}^{\ast} (t, \mu, i_0) := \limsup_{E \ni (t',\mu') \mapsto (t,\mu)} u(t, \mu, i_0).
$$
The lower semicontinuous envelope $u_{\ast}^{E \times \cs}$ is defined analogously.
\end{definition}

\begin{notation}
$$
u^{\ast} := u_{\bar{\co}\times \cs}^{\ast}, \quad
u_{\ast} := u_{\ast}^{\bar{\co}\times \cs}, \quad
u_M^{\ast} := u_{\bar{\co}_M\times \cs}^{\ast}, \quad
u_{\ast}^M := u_{\ast}^{\bar{\co}_M \times \cs}
$$
\end{notation}

\begin{definition}
We say that a function $u: \co_M\times \cs \to \R$ is a viscosity subsolution of \eqref{eq:HJB} on $\co_M \times \cs$ if for every $(t, \mu, i_0) \in \co_M \times \cs$
$$
-\pi_t + {\ch}(t, \mu, {\pi}_{\mu}) \leq 0, \quad \forall (\pi_t, {\pi}_{\mu}) \in J_{\co_M \times \cs}^{1,+} u_M^{\ast}(t, \mu, i_0).
$$ 

We say that a function $u: \co_M\times \cs \to \R$ is a viscosity supersolution of \eqref{eq:HJB} on $\co_M \times \cs$ if for every $(t, \mu, i_0) \in \co_M \times \cs$
$$
-\pi_t + {\ch}(t, \mu, {\pi}_{\mu}) \geq 0, \quad \forall (\pi_t, {\pi}_{\mu}) \in J_{\co_M \times \cs}^{1,-} u_{\ast}^M(t, \mu, i_0).
$$ 
A viscosity solution of \eqref{eq:HJB} is a function on $\co$ that is both a subsolution and a supersolution of \eqref{eq:HJB} on $\co_M \times \cs$ for every $M \in \N$.
\end{definition}

\begin{proposition}\label{prop:ito-cyl}
For every $\phi \in \Phi_{\co_M \times \cs}$, $(t, \mu, i_0) \in \co_M \times \cs$,  and $v \in \ca$:
\begin{align}\label{eq:ito-cyl}
\phi(u, \mu_u, \al_u) &= \phi(t, \mu, i_0) + \int_t^u \lc \partial_s \phi(s, \mu_s, \al_s) + \lla \mu_s, \cl_s^{\mu_s, \al_s, v}\lc {D_m \phi} \rc \rra\right.\\ &+ \left.\sum_{j_0 \neq \al_{s-}} q_{\al_{s-},j_0} [\phi(s, \mu_s, j_0) - \phi(s, \mu_s, \al_{s-})]  \rc ds\\
 &+ \sum_{j_0 \neq \al_{s-} \in \cs} \int_t^u \lc \phi(s, \mu_s, j_0, \cdot) - \phi(s, \mu_s, i_0, \cdot) \rc dM_{\al_{s-} j_0}^{\al}(s), \quad u \in [t, T],
\end{align}
where $\mu_s = \cl (X_s \vert \cf_{s-}^{\al})$ and $(X_s)_{s \in [t, T]}$ is the solution to \eqref{eq:SDE} with initial conditional distribution $\mu$ and $\al_t=i_0$. 
\end{proposition}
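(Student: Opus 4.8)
The plan is to first establish the formula for a single cylindrical polynomial and then recover the general case by summing over the defining series of $\phi$. Throughout, the two distinct sources of dynamics should be kept separate: the evolution of the conditional law $\mu_s = \cl(X_s \mid \cf_{s-}^{\al})$, which is driven by the McKean--Vlasov jump diffusion and contributes the $\partial_s$ and $\lla \mu_s, \cl_s^{\mu_s,\al_s,v}[D_m\phi]\rra$ terms, and the jumps of the regime process $\al$, which contribute the $q$-weighted compensator term and the martingale integral against $dM^{\al}_{\al_{s-}j_0}$. Since $\al$ changes only at isolated times, one may freely replace $\al_s$ by $\al_{s-}$ inside the $ds$-integral, so for a.e.\ $s$ the two are interchangeable.

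Reduce to $\phi(t,\mu,i_0)=F(t,\lla\mu,f\rra,i_0)$ with $f$ a polynomial and $F(\cdot,\cdot,i_0)$ continuously differentiable. The first step is to identify the dynamics of the scalar process $Y_s := \lla \mu_s, f\rra = \E[f(X_s)\mid \cf_{s-}^{\al}]$. Applying the classical Itô formula for jump diffusions to $f(X_s)$ along \eqref{eq:SDE} gives $f(X_s) = f(X_t) + \int_t^s \cl_r^{\mu_r,\al_{r-},v}f(X_r)\,dr + N_s$, where $N_s$ collects the Brownian integral $\int_t^s f'(X_r)\si(\ldots)\,dB_r$ and the compensated jump integral coming from $J$. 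Conditioning on the regime filtration $\cf^{\al}$ and using that, given the whole path of $\al$, the drivers $B$ and $J$ are independent of $\al$ (so $N$ remains a martingale whose $\cf^{\al}$-conditional increments vanish), a conditional Fubini argument yields $dY_s = \lla \mu_s, \cl_s^{\mu_s,\al_{s-},v}f\rra\,ds$. Here the coefficients are evaluated at the conditional law $\mu_s$ itself, which is legitimate because $\mu_s$ is $\cf^{\al}$-measurable and \eqref{eq:SDE} is assumed well posed. The exponential moment bounds from the Notation section guarantee the requisite integrability for every polynomial $f$.

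With the dynamics of $Y_s$ in hand, apply the chain rule to $F(s,Y_s,\al_s)$. Between jumps of $\al$ the regime index is frozen and $F$ is $C^1$ in $(s,y)$, producing $\partial_s F\,ds + \partial_y F\,dY_s = \big(\partial_s\phi + \partial_y F\,\lla \mu_s,\cl_s^{\mu_s,\al_{s-},v}f\rra\big)\,ds$. For the cylindrical polynomial one has $D_m\phi(\mu,x)=\partial_y F(\lla\mu,f\rra)\,f(x)$, and since $\cl_s^{\mu_s,\al_{s-},v}$ acts only in the spatial variable, $\lla\mu_s,\cl_s^{\mu_s,\al_{s-},v}[D_m\phi]\rra = \partial_y F\,\lla\mu_s,\cl_s^{\mu_s,\al_{s-},v}f\rra$, matching the claimed integrand. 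Each jump of $\al$ from $\al_{s-}$ to $j_0$ changes $F$ by $\phi(s,\mu_s,j_0)-\phi(s,\mu_s,\al_{s-})$, so summing over jump times gives $\sum_{j_0}\int_t^u[\phi(s,\mu_s,j_0)-\phi(s,\mu_s,\al_{s-})]\,d[M^{\al}_{\al_{s-}j_0}](s)$; writing $d[M^{\al}_{i_0j_0}] = dM^{\al}_{i_0j_0} + q_{i_0j_0}\mathbf{1}_{\{\al_{s-}=i_0\}}\,ds$ (using the decomposition and the martingale property of $M^{\al}_{i_0j_0}$ recorded earlier) splits this into precisely the compensator drift and the martingale term in \eqref{eq:ito-cyl}.

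The final step is to pass from a single cylindrical polynomial to $\phi=\sum_j\phi_j\in\Phi_{\co_M\times\cs}$. Summing the per-term identity, I would invoke the absolute convergence in (i) of the definition of viscosity test functions to control $\phi$ itself and the $q$-term, and the uniform tail condition (ii) together with the summability of the weights $c_j$ to justify interchanging the infinite sum with the time integral and the stochastic integral (and to guarantee that the resulting series for $\lla\mu_s,\cl_s[D_m\phi]\rra$ and for the martingale converge, say in $L^1$). I expect the main obstacle to be precisely the conditioning-and-interchange step of the second paragraph: rigorously arguing that the $\cf^{\al}$-conditional expectation annihilates the Brownian and jump-$J$ martingales and that conditional expectation commutes with the Itô integral (a conditional stochastic Fubini statement), all while the coefficients depend on the random measure $\mu_s$. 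Once the scalar identity $dY_s=\lla\mu_s,\cl_s^{\mu_s,\al_{s-},v}f\rra\,ds$ is secured, the remaining chain-rule and compensation computations are routine given the martingale structure of $M^{\al}$ established earlier.
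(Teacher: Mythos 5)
Your proposal follows essentially the same route as the paper's proof: apply Itô's formula to $f(X_s)$ and take conditional expectation given $\cf^{\al}$ to obtain $d\lla\mu_s,f\rra=\lla\mu_s,\cl_s^{\mu_s,\al_{s-},v}[f]\rra\,ds$, then run the chain rule on the cylindrical polynomial $F(s,\lla\mu_s,f\rra,\al_s)$ with the regime jumps split into compensator and $dM^{\al}$ martingale parts, identify $D_m\phi=\partial_yF\cdot f$, and extend to general test functions by the summability conditions as in the cited reference. The argument is correct; your added care about the conditional-Fubini step is a refinement of, not a departure from, the paper's (terser) treatment.
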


\begin{proof}
For a given polynomial $f$, $\lla \mu_s, f \rra = \E f(X_s)$. Thus using It\^o's formula and taking expectation:
$$
\lla \mu_s, f \rra = \lla \mu, f \rra + \int_t^u \lla \mu_s, \cl_s^{\mu_{s-}, \al_{s-},v}[f] \rra ds.
$$
Now considering a cylindrical polynomial $\phi(t, \mu, i_0) = F(t, \lla \mu, f \rra, i_0)$ we have
\begin{align*}
\phi(u, \mu_u, \al_u) &= \phi(t, \mu, i_0) + \int_t^u \lc \pl_s \phi(s, \mu_s, \al_s) + \pl_x F(s, \lla \mu_s, f \rra) \lla \mu_s, \cl_s^{\mu_{s-}, \al_{s-},v}[f] \rra  \rc ds \\
&+\int_t^u \sum_{j_0 \neq \al_{s-}} q_{\al_{s-},j_0} [\phi(s, \mu_s, j_0) - \phi(s, \mu_s, \al_{s-})]  ds\\
&+ \sum_{j_0 \neq \al_{s-} \in \cs} \int_t^u \lc \phi(s, \mu_s, j_0, \cdot) - \phi(s, \mu_s, i_0, \cdot) \rc dM_{\al_{s-} j_0}^{\al}(s) 
\end{align*}
Since $D_m \phi(s, \mu, i_0) = \pl_x F(s, \lla \mu, f \rra, i_0) f$, we have the result for cylindrical polynomials which can be extended to general $\phi \in \co_M$ as in \cite{soner}.

\end{proof}

\begin{comment}
\begin{lemma}\label{lem:H-cont}
Let $\phi \in \Phi_{\co_M \times \cs}$. Then for every $i_0 \in \cs$, the map $(t, \mu) \to \sup_{v \in A} {\ch}^v(t, \mu, i_0, {D_m \phi})$ is Lipschitz continuous on $\co_M$. 
\end{lemma}

\begin{proof}

\end{proof}
\end{comment}

\subsection{Value Function}
\begin{lemma}\label{lem:V-bound}
For each $M$, $V$, $V_M^{\ast}$ and $V_{\ast}^M$ are bounded on $\co_M$. 
\end{lemma}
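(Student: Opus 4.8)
The plan is to read off the boundedness of $V$ directly from the boundedness of the running and terminal costs in Assumption~\ref{hyp:main}(iv), and then to transfer this bound to the two semicontinuous envelopes. No serious machinery is needed; the only care is in passing from $V$ to its envelopes and in checking that the infimum defining $V$ is a genuine real number.

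First I would invoke Assumption~\ref{hyp:main}(iv) to fix constants $C_f,C_h<\infty$ with $|f|\le C_f$ and $|h|\le C_h$ uniformly in all their arguments. Fix any $(t,\rho,i_0)\in\co_M\times\cs$ and any admissible $v\in\ca$, and write $X=X^v$ for the associated solution of \eqref{eq:SDE} with $\mu_s=\cl(X_s\vert\cf_{s-}^{\al})$. Then the cost is bounded pathwise by
\[
\lln \int_t^T f(s,X_s,\mu_s,v_s,\al_{s-})\,ds + h(T,X_T,\mu_T,\al_T) \rrn \le C_f(T-t)+C_h \le C_f T + C_h .
\]
Taking expectations, the quantity inside the infimum in \eqref{eq:value} lies in $[-(C_fT+C_h),\,C_fT+C_h]$ for \emph{every} $v\in\ca$. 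Since $\ca$ is non-empty and the solution of \eqref{eq:SDE} is well defined with the integrability recorded in the state-space discussion, the infimum is finite and $|V(t,\rho,i_0)|\le C_fT+C_h=:C$. The decisive point is that this bound is independent of $(t,\rho,i_0)$ and of $M$, so it holds on all of $\co_M\times\cs$ (in fact on $\bar\co_M\times\cs$, where the same estimate applies for every $t\in[0,T]$).

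Next I would pass to the envelopes. Recall $V_M^{\ast}=V^{\ast}_{\bar\co_M\times\cs}$ and $V_{\ast}^M=V_{\ast}^{\bar\co_M\times\cs}$ are defined as the $\limsup$, respectively $\liminf$, of $V$ along points of $\bar\co_M$ converging to the given $(t,\mu)$. Because every value of $V$ entering these limiting operations already satisfies $|V|\le C$, both the $\limsup$ and the $\liminf$ inherit the same bound, yielding $|V_M^{\ast}|\le C$ and $|V_{\ast}^M|\le C$ on $\co_M$.

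I do not expect any genuine obstacle here: the result is essentially immediate once Assumption~\ref{hyp:main}(iv) is in hand. The only items deserving a sentence of justification are that the admissible class $\ca$ is non-empty (so $V$ is real rather than $+\infty$) and that \eqref{eq:SDE} admits a solution with finite expectations, both guaranteed by the standing hypotheses; and that the uniformity of the constant $C$ in $M$ is precisely what lets the conclusion hold simultaneously for every $M\in\N$.
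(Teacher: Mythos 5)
Your proof is correct and is exactly the standard argument: boundedness of $f$ and $h$ from Assumption~\ref{hyp:main}(iv) gives a uniform bound $C_f T + C_h$ on the cost of every admissible control, hence on the infimum, and the semicontinuous envelopes trivially inherit any uniform bound. The paper simply defers to the analogous lemma in \cite{soner}, which proceeds in the same way, so there is nothing to add.
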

\begin{proof}
The proof is similar to that in \cite{soner}.
\end{proof}

\begin{proposition}
Assume \eqref{eq:DPP} holds. Then for any $M \in \N$, the value function $V$ is both a viscosity sub- and supersolution to \eqref{eq:HJB} on $\co_M$ and
$$
V_M^{\ast}(T, \mu, i_0) = V_{\ast}^M (T, \mu, i_0) = \lla \mu, h(T, \cdot, \mu, i_0) \rra, \forall \mu \in \cm_{M e^{K^{\ast}T}},  i_0 \in \cs. 
$$
\end{proposition}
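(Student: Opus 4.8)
The plan is to establish the two assertions separately: first the terminal condition, then the viscosity sub/supersolution properties, using the dynamic programming principle \eqref{eq:DPP} together with the It\^o formula of Proposition~\ref{prop:ito-cyl}. For the terminal condition, observe that at time $T$ the only admissible choice reduces the value function to the terminal cost, so $V(T,\mu,i_0) = \lla \mu, h(T,\cdot,\mu,i_0) \rra$ directly from the definition \eqref{eq:value}. To promote this to the semicontinuous envelopes, I would use the Lipschitz estimate in Assumption~\ref{hyp:main}(iv) on $h$ together with the metric $d$ from Lemma~\ref{lem:metric-d}: since $h$ is Lipschitz in $(t,x,\mu)$ against finitely many monomials $\{x^k\}_{k \in \ci}$, the map $(t,\mu) \mapsto \lla \mu, h(T,\cdot,\mu,i_0)\rra$ is continuous on $\co_M$ in the weak* topology restricted to $\cm_{Me^{K^*T}}$, so the upper and lower envelopes both coincide with the function itself. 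A short argument controlling the continuity of $V$ near $T$ (via the bound $|b|,|\si|,|\la| \le C_0$ in Assumption~\ref{hyp:main}(i) and the exponential-moment control keeping trajectories in $\co_M$) then forces $V_M^{\ast}(T,\cdot) = V_{\ast}^M(T,\cdot)$ to equal this common value.

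For the subsolution property, I would fix $(t,\mu,i_0) \in \co_M \times \cs$ with $t<T$ and a test function $\varphi \in \Phi_{\co_M \times \cs}$ such that $V_M^{\ast} - \varphi$ attains a maximum at $(t,\mu,i_0)$, yielding a superjet element $(\partial_t \varphi, (D_m\varphi(t,\mu,k,\cdot))_k)$. I would then take a sequence $(t_n,\mu_n) \to (t,\mu)$ along which $V(t_n,\mu_n,i_0) \to V_M^{\ast}(t,\mu,i_0)$, apply \eqref{eq:DPP} with an arbitrary constant control $v \in A$ over a short horizon $\theta = t_n + \epsilon$, and insert the It\^o expansion \eqref{eq:ito-cyl} for $\varphi$. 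The martingale term integrated against $dM^{\al}_{\al_{s-}j_0}$ has zero expectation, so after taking expectations, dividing by $\epsilon$, and letting $\epsilon \downarrow 0$ and $n \to \infty$, the maximality of $V_M^{\ast}-\varphi$ converts the DPP inequality into the required differential inequality $-\partial_t\varphi + \ch(t,\mu,D_m\varphi) - \sum_{j_0} q_{i_0 j_0}[\varphi(t,\mu,j_0)-\varphi(t,\mu,i_0)] \le 0$; since the operator $\ch = \sup_v \ch^v$ absorbs the supremum over $v$, and the generator term matches the jump part of \eqref{eq:HJB}, this gives the subsolution inequality as written with $\pi_t = \partial_t\varphi$ and $\pi_\mu = D_m\varphi$. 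The supersolution property follows by the symmetric argument, replacing the arbitrary control by a near-optimal one achieving the infimum in \eqref{eq:DPP} up to $o(\epsilon)$.

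The main obstacle I anticipate is the interchange of limits and the handling of the envelopes rather than $V$ itself. Because $V$ need not be continuous, the test-function touching occurs against $V_M^{\ast}$ (resp. $V_{\ast}^M$), so I must carefully select approximating sequences $(t_n,\mu_n)$ realizing the envelope value and verify that the trajectory $\mu_s = \cl(X_s\vert\cf^{\al}_{s-})$ started from $\mu_n$ stays in $\co_M$ (guaranteed by the invariance lemma preceding this proposition) and that the It\^o expansion of $\varphi$ converges appropriately as $n\to\infty$. The absolute and uniform convergence built into the definition of $\Phi_{\co_M\times\cs}$ is exactly what licenses passing the expectation and the $\epsilon\downarrow 0$ limit through the infinite sum defining $\varphi$, and the boundedness of $f$ together with Assumption~\ref{hyp:main} ensures the remainder terms are $o(\epsilon)$ uniformly. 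A secondary technical point is confirming that the supremum over the Polish space $A$ in $\ch$ is attained or at least approximated measurably when selecting near-optimal controls; this is standard given the continuity and boundedness hypotheses, and I would invoke the classical dynamic-programming verification machinery of \cite{fleming-soner} to close the argument.
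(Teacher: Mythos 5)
Your subsolution argument and your overall architecture (DPP plus the It\^o expansion of Proposition~\ref{prop:ito-cyl}, with the martingale term killed by taking expectations) coincide with the paper's Step 1. The genuine divergence is in the supersolution step: the paper does \emph{not} use near-optimal controls directly. It argues by contradiction, assuming the supersolution inequality fails at $(t,\mu,i_0)$, upgrading the minimum of $V_{\ast}^M-\phi$ to a strict one, using continuity of $\ch$ to propagate the reversed inequality to a neighborhood $B_M$ uniformly in $v\in A$, and then running the dynamics up to the exit time $\theta_n$ of $B_M$ to contradict \eqref{eq:DPP}. Your direct route --- pick a control that is $o(\epsilon)$-optimal on $[t_n,t_n+\epsilon]$, bound $f+\lla\mu_s,\cl_s^{\mu_s,\al_s,v_s}[D_m\varphi]\rra\geq -\ch(s,\mu_s,D_m\varphi)$ pointwise, divide by $\epsilon$ --- is a legitimate classical alternative and does close here, because the sup over $v$ is absorbed in the right direction and the coefficients are bounded uniformly in $v$, giving a control-independent modulus of continuity for $s\mapsto\mu_s$ near $t_n$. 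What you must supply that the paper's contradiction argument gets for free is the uniform-in-$v$ continuity of $(s,\nu)\mapsto\ch(s,\nu,i_0,D_m\varphi)$ along the controlled trajectory, so that $\frac{1}{\epsilon}\E\int_{t_n}^{t_n+\epsilon}\ch(s,\mu_s^{n,v^{n,\epsilon}},\cdot)\,ds\to\ch(t,\mu,\cdot)$ even though the control changes with $\epsilon$ and $n$; you should make this explicit rather than deferring to ``standard machinery.'' The trade-off is the usual one: your route is shorter, the paper's is more robust to a non-compact control space and avoids any selection issue.

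On the terminal condition your argument is thinner than it needs to be. The envelopes $V_M^{\ast}(T,\mu,i_0)$ and $V_{\ast}^M(T,\mu,i_0)$ are limits of $V(t_n,\mu_n,i_0)$ over sequences with $t_n<T$, so continuity of $(t,\mu)\mapsto\lla\mu,h(T,\cdot,\mu,i_0)\rra$ alone proves nothing; one must show that the terminal law $\mu_T^{n,v}$ of the trajectory launched from $(t_n,\mu_n)$ converges to $\mu$. In the paper this is the content of Steps 3--4: compactness of $\overline{\co}_M$ gives a subsequential limit $\hat\mu$, It\^o's formula on monomials gives $\lla\mu_T^{n,v}-\mu_n,x^j\rra\to 0$, and Remark~\ref{rem:mu=nu} identifies $\hat\mu=\mu$. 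Moreover the two envelopes require two different estimates: an upper bound for $V_M^{\ast}$ using an \emph{arbitrary} fixed control, and a lower bound for $V_{\ast}^M$ using a diagonal sequence of $1/n$-optimal controls $v^n$. Your sketch collapses both into ``a short argument controlling the continuity of $V$ near $T$''; as written this is a gap, though one that is filled by exactly the moment-identification argument above rather than by any new idea.
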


\begin{proof}
Fix $M \in \N$. By Lemma~\ref{lem:V-bound}, both envelopes $V_M^{\ast}$ and $V_{\ast}^M$ are finite. 

\emph{Step 1: $V_M^{\ast}$ is a viscosity subsolution for $t < T$.} Suppose that for $\phi \in \Phi_{\co_M \times \cs}$ and $(t, \mu) \in \co_M$
$$
0 = (V_M^{\ast} - \phi)(t, \mu, i_0) = \max_{(t,\mu) \in \co_M} \lp V_M^{\ast} - \phi \rp (t, \mu, i_0) \quad \forall i \in \cs.
$$
Fix an arbitrary $i$ and let $(t_n, \mu_n)$ be a sequence in $\co_M$ such that $(t_n, \mu_n, V(t_n, \mu_n, i_0)) \to (t, \mu, V_M^{\ast}(t, \mu, i_0))$. Now fix $v \in A$ and let $(X_s^{t_n, \mu_n, i_0, v})_{s \in [t_n, T]}$ denote the solution to \eqref{eq:SDE} with constant control $v$, $\cf_{-}^{\al}$-conditional distribution $\mu_n$ at time $t_n$ and $\al_{t_n}^n = i_0$. We denote $\mu_s^{n,v} = \cl(X_s^{t_n, \mu_n, i_0, v} \vert \cf_{s-}^{\al})$. Using \eqref{eq:DPP} with $\theta_n = t_n + h$ for $0 < h < T-h_n$, we have
\begin{align*}
V(t_n, \mu_n, i_0) &\leq \E \lc \int_{t_n}^{\theta_n} f(s, X_s^{t_n, \mu_n, i_0, v}, \mu_s^n, v, \al_{s-}^n) ds + V(\theta_n, \mu_{\theta_n}^n, \al_{\theta_n}^n) \rc \\
&\leq  \E \lc \int_{t_n}^{\theta_n} f(s, X_s^{t_n, \mu_n, i_0, v}, \mu_s^n, v, \al_{s-}^n) ds + \phi(\theta_n, \mu_{\theta_n}^n, \al_{\theta_n}^n) \rc.
\end{align*}
Passing to the limit we obtain
$$
V_M^{\ast}(t, \mu, i_0) = \phi(t, \mu, i_0) \leq \E \lc \int_{t}^{t+h} f(s, X_s, \mu_s, v, \al_{s-}) ds + \phi(t+h, \mu_{t+h}, \al_{t+h}) \rc. 
$$
Then using Proposition~\ref{prop:ito-cyl} and recalling that $M_{i_0 j_0}^{\al}$ is a martingale, we have that
$$
0 \leq \E \int_{t}^{t+h} \lc f(s, X_s, \mu_s, v, \al_{s-}) +  \partial_s \phi(s, \mu_s, \al_s) + \lla \mu_s, \cl_s^{\mu_s, \al_s, v} [{D_m \phi}]  \rra \rc ds.
$$
This implies
$$
0 \leq \E^{\al} \int_{t}^{t+h} \lc  \partial_s \phi(s, \mu_s, \al_s) + \lla \mu_s, f(s, \cdot , \mu_s, v, \al_{s-}) + \cl_s^{\mu_s, \al_s, v} [{D_m \phi}] \rra \rc ds.
$$
This holds for any $h > 0$. Recalling $\al_t = i_0$ we then obtain
$$
0 \leq \partial_t \phi(t, \mu, i_0) - {\ch}^v(t, \mu, i_0, {D_m \phi}) 
$$
which implies
$$
-\partial_t \phi(t, \mu, i_0) + {\ch}(t, \mu, i_0, {D_m \phi}) \leq 0. 
$$

\emph{Step 2: $V_{\ast}^M$ is a viscosity supersolution for $t < T$.} Suppose there exists $(t, \mu) \in \co_M$, $\phi \in \Phi_{\co_M \times \cs}$ such that 
$$
0 = (V_{\ast}^M - \phi) (t, \mu, i_0) = \min_{(t, \mu) \in \co_M} (V_{\ast}^M - \phi)(t, \mu, i_0) \quad \forall i \in \cs.
$$
Using \cite[Lemma 7.1]{soner} we get that the above minimum is strict. To derive a contradiction we assume that
$$
-\partial_t \phi(t, \mu, i_0) + {\ch} (t, \mu, i_0, {D_m \phi}) < 0, \quad \text{for some } i \in \cs, \quad \text{for some } i \in \cs.
$$
Since ${\ch}$ is continuous in $(t, \mu)$ there exists a neighborhood $B$ of $(t, \mu)$ such that 
\beq\label{eq:ineq-1}
-\partial_t \phi(t, \mu, i_0) - \lla \mu, f(t, \cdot, \mu, v, i_0) +  \cl_t^{\mu, i_0, v} [{D_m \phi}] \rra < 0, \quad \forall (t, \mu) \in B_M := B \cap \co_M, \forall v \in A.
\eeq
Let $(t_n, \mu_n)$ be a sequence in $\co_M$ such that $(t_n, \mu_n, V(t_n, \mu_n, i_0)) \to (t, \mu, V_{\ast}^M(t, \mu))$. This means that for all large $n$, $(t_n, \mu_n) \in B_M$. Fix an arbitrary control $v \in \ca$ and let \sloppy $(X_s^{t_n, \mu_n, v, i})_{s \in [t_n, T]}$ denote the solution to \eqref{eq:SDE} with $\cf_{-}^{\al}$-conditional distribution $\mu_n$ and value of the Markov chain $\al_{t_n}^n = i$ at initial time $t_n$. Consider
$$
\theta_n := \inf \lcl s \geq t_n : (s, \mu_s^{n,v}) \not\in B_M \text{ or } \al_s^n \neq i \rcl \wedge T.
$$
This is an $\cf^{\al}$ stopping time. By Proposition~\ref{prop:ito-cyl} we have
\begin{multline*}
\phi(t_n, \mu_n, i_0) = \phi(\theta_n, \mu_{\theta_n}^{n,v}, \al_{\theta_n}^n) - \int_{t_n}^{\theta_n} \lc \partial_s \phi(s, \mu_s^n, \al_s^n) + \lla \mu_s^n, \cl_s^{\mu_s^n, \al_{s-}^n, v_s}\lc {D_m \phi} \rc \rra  \rc ds\\
 - \sum_{i_0\neq j_0 \in \cs} \int_{t_n}^{\theta_n} \lla \mu_s^n, D_m \phi(s, \mu_s^n, j_0, \cdot) - D_m \phi(s, \mu_s^n, i_0, \cdot) \rra dM_{i_0 j_0}^{\al^n}(s)
\end{multline*}
From \eqref{eq:ineq-1} we obtain that
\begin{multline*}
\phi(t_n, \mu_n, i_0) = \phi(\theta_n, \mu_{\theta_n}^{n,v}, \al_{\theta_n}) + \int_{t_n}^{\theta_n} \lla \mu_s^n, f(s, \cdot, \mu_s^n, v, \al_{s-}^n) \rra ds\\
 - \sum_{i_0\neq j_0 \in \cs} \int_{t_n}^{\theta_n} \lla \mu_s^n, D_m \phi(s, \mu_s^n, j_0, \cdot) - D_m \phi(s, \mu_s^n, i_0, \cdot) \rra dM_{i_0 j_0}^{\al^n}(s).
\end{multline*}
Since $\overline{\co}_M \setminus B_M = \overline{\co}_M \setminus B$ is compact and $V_{\ast}^M - \phi$ has a strict minimum at $(t, \mu)$, there exists $\eta > 0$ independent of $v$ such that $\phi \leq V_{\ast}^M - \eta \leq V- \eta$ on $\overline{O}_M \setminus B$. Hence we now have
\begin{multline*}
\phi(t_n, \mu_n, i_0) = V(\theta_n, \mu_{\theta_n}^{n,v}, \al_{\theta_n}^n) + \int_{t_n}^{\theta_n} \lla \mu_s^n, f(s, \cdot, \mu_s^n, v_s, \al_{s-}^n) \rra ds\\
 - \sum_{i_0\neq j_0 \in \cs} \int_{t_n}^{\theta_n} \lla \mu_s^n, D_m \phi(s, \mu_s^n, j_0, \cdot) - D_m \phi(s, \mu_s^n, i_0, \cdot) \rra dM_{i_0 j_0}^{\al^n}(s) - \eta.
\end{multline*}
Since $(\phi - V)(t_n, \mu_n, i_0) \to 0$, for $n$ large enough we have
\begin{multline*}
V(t_n, \mu_n, i_0) \leq V(\theta_n, \mu_{\theta_n}^{n,v}, \al_{\theta_n}^n) + \int_{t_n}^{\theta_n} \lla \mu_s^n, f(s, \cdot, \mu_s^n, v_s, \al_{s-}^n) \rra ds\\
 - \sum_{i_0\neq j_0 \in \cs} \int_{t_n}^{\theta_n} \lla \mu_s, D_m \phi(s, \mu_s^n, j_0, \cdot) - D_m \phi(s, \mu_s^n, i_0, \cdot) \rra dM_{i_0 j_0}^{\al^n}(s) - \dfrac{\eta}{2}.
\end{multline*}
Taking expectation we obtain
$$
V(t_n, \mu_n, i_0) \leq \E \lc \int_{t_n}^{\theta_n} \lla \mu_s^n, f(s, \cdot, \mu_s^n, v_s, \al_{s-}^n) \rra ds + V(\theta_n, \mu_{\theta_n}^{n, v}, \al_{\theta_n}^n)  \rc - \dfrac{\eta}{2}
$$
Rewriting the above and noting that $\eta$ is independent of $v \in \ca$ we get
$$
V(t_n, \mu_n, i_0) \leq \E \lc \int_{t_n}^{\theta_n} f(s, X_s^{t_n, \mu_n, i_0, v}, \mu_s^n, v_s, \al_{s-}^n)ds + V(\theta_n, \mu_{\theta_n}^{n, v}, \al_{\theta_n}^n)  \rc - \dfrac{\eta}{2},
$$
contradicting \eqref{eq:DPP}. Hence $V_{\ast}^M$ is a viscosity supersolution to \eqref{eq:HJB}.

\emph{Step 3: $V_M^{\ast}(T, \mu, i_0) = \lla \mu, h(T, \cdot, \mu, i_0) \rra$ for  $\mu \in \cm_{M e^{K^* T}}$.} 
Consider a sequence $(t_n, \mu_n) \in \overline{\co_M}$  converging to $(T, \mu)$ such that $V_M^{\ast} (T, \mu, i_0) = \lim_{n \to \infty} V(t_n, \mu_n, i_0)$. By Assumption~\ref{hyp:main}
$$
\E \lc \int_{t_n}^T \lla \mu_s^n, f(s, \cdot, \mu_s^n, v_s, \al_{s-}^n) \rra ds \rc \to 0,
$$
as $n \to \infty$. By the compactnes of $\overline{\co}_M$, there exists $\hat{\mu} \in \cm_{M}$ such that $\mu_T^{n, v} \to \hat{\mu}$ up to a subsequence. It\^o's formula implies
$$
\lla \mu_T^{n, v} - \mu_n, x^j \rra = \int_{t_n}^{T} \lla \mu_s^n, \cl[x^j] \rra ds \to 0,
$$
owing to \cite[Lemma 6.8]{soner} where $\cl$ is the operator \eqref{eq:operator} acting only on the $x$-variable. This implies $\hat{\mu} = \mu$ courtesy Remark~\ref{rem:mu=nu}. Consequently for arbitrary $v \in \ca$:
\begin{multline*}
V_M^{\ast}(t, \mu, i_0) = \lim_{n \to \infty} V(t_n, \mu_n, i_0)\\ \leq \lim_{n \to \infty} \E \lc \int_{t_n}^T \lla \mu_s^n, f(s, \cdot, \mu_s^n, v_s, \al_{s-}^n) \rra ds + \lla \mu_T^n, h(T, \cdot, \mu_T^n, \al_T^n) \rra \rc = \lla  \mu, h(T, \cdot, \mu_T, i_0)\rra.
\end{multline*}
Since $V_M^{\ast} (T, \mu, i_0) \geq V(T, \mu, i_0) = \lla \mu, h(T, \cdot, \mu, i_0) \rra$, we conclude that $V_M^{\ast}(T, \mu, i_0) = \lla \mu, h(T, \cdot, \mu, i_0) \rra$. 

\emph{Step 4: $V_{\ast}^M (T, \mu, i_0) = \lla \mu, h(T, \cdot, \mu, i_0) \rra$  for $\mu \in \cm_{M e^{K^* T}}$.} 
Consider a sequence $(t_n, \mu_n) \in \overline{\co_M}$  converging to $(T, \mu)$ such that $V_M^{\ast} (T, \mu, i_0) = \lim_{n \to \infty} V(t_n, \mu_n, i_0)$. As before 
$$
\E \lc \int_{t_n}^T \lla \mu_s^n, f(s, \cdot, \mu_s^n, v_s, \al_{s-}^n) \rra ds \rc \to 0,
$$ uniformly in $v \in \ca$. In addition $\lla \mu_T^n, h(T, \cdot, \mu_T^n, \al_T^n) \rra \to \lla \mu, h(T, \cdot, \mu, i_0) \rra$ as $n \to \infty$. For $n \in \N$ choose $v^n \in \ca$ so that
$$
V(t_n, \mu_n, i_0) \geq \E \lc \int_{t_n}^T \lla \mu_s^n, f(s, \cdot, \mu_s^n, v_s^n, \al_{s-}^n) \rra ds + \lla \mu_T^n, h(T, \cdot, \mu_T^n, \al_T^n) \rra \rc - \dfrac{1}{n}.
$$
This implies that 
\begin{multline*}
V_{\ast}^M(T, \mu, i_0) = \lim_{n \to \infty} V(t_n, \mu_n, i_0) \\ \geq \lim_{n \to \infty} \E \lc \int_{t_n}^T  \lla \mu_s^n, f(s, \cdot, \mu_s^n, v_s^n, \al_{s-}^n) \rra ds + \lla \mu_T^n, h(T, \cdot, \mu_T^n, \al_T^n) \rra \rc = \lla \mu, h(T, \cdot, \mu, i_0) \rra.
\end{multline*}
\end{proof}

\subsection{Comparison Result}

\begin{proposition}\label{thm:comparison}
Let $u$ be a u.s.c. subsolution to \eqref{eq:HJB} on ${\co}_M \times \cm$ and $v$ be a l.s.c. subsolution to \eqref{eq:HJB} on ${\co}_M \times \cm$, such that $u(T, \mu, i_0) \leq v(T, \mu, i_0)$ for any $(T, \mu, i_0) \in \overline{\co}_M \times \cm$. Then $u \leq v $ on $\overline{\co}_M \times \cm$.  
\end{proposition}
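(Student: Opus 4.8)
The plan is to argue by contradiction through the doubling-of-variables method, the key simplification being that the exponential-moment constraint makes the domain compact, so no Ekeland-type perturbation is required. First I would pass to a strict supersolution: for $\theta>0$ set $v^\theta:=v+\theta(T-t)$; since the extra term is affine in $t$ and independent of $\mu$ and of $i_0$, it leaves $D_m v$ and every coupling difference $v(\cdot,j_0)-v(\cdot,i_0)$ unchanged while shifting $\partial_t$, so $v^\theta$ is a supersolution of \eqref{eq:HJB} with a strict surplus $\theta$ and still satisfies $v^\theta(T,\cdot)=v(T,\cdot)\ge u(T,\cdot)$; proving $u\le v^\theta$ for every $\theta>0$ and letting $\theta\downarrow0$ gives the claim. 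For compactness, Assumption~\ref{hyp:main}(iii) endows each $\cm_{Me^{K^{\ast}T}}$ with a uniform $\der$-exponential moment, and since $e_{\der}$ is continuous with $e_{\der}(x)\to\infty$, Prokhorov together with weak lower semicontinuity of $\mu\mapsto\lla\mu,e_{\der}\rra$ make $\cm_{Me^{K^{\ast}T}}$ weakly compact, hence $d$-compact by Lemma~\ref{lem:metric-d}. Thus $\overline{\co}_M\times\cs$ is compact; as $u$ is u.s.c.\ and $v^\theta$ is l.s.c., the number $m:=\max_{i_0\in\cs}\max_{\overline{\co}_M}(u-v^\theta)(\cdot,\cdot,i_0)$ is attained, at some regime $\hat\imath$, and I assume for contradiction that $m>0$.

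Next I would double the continuous variables using the smooth gauge $\rho(\mu,\nu):=\sum_j c_j\lla\mu-\nu,f_j\rra^2$, which is regime-independent and, by Remark~\ref{rem:mu=nu} together with $\{x^k\}\subseteq\{f_j\}$, vanishes exactly on the diagonal; by $c_j\le 2^{-j}$ and $\sum_j c_j\lla\mu,f_j\rra^2<\infty$ its functional derivative $x\mapsto 2\sum_j c_j\lla\mu-\nu,f_j\rra f_j(x)$ belongs to $\Phi_{\co_M\times\cs}$. For $\ep>0$ I maximize over the compact set
\[
\Phi_\ep(t,\mu,s,\nu):=u(t,\mu,\hat\imath)-v^\theta(s,\nu,\hat\imath)-\tfrac{1}{2\ep}(t-s)^2-\tfrac{1}{2\ep}\rho(\mu,\nu),
\]
at a maximizer $(t_\ep,\mu_\ep,s_\ep,\nu_\ep)$. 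The standard penalization estimates yield $\tfrac1\ep[(t_\ep-s_\ep)^2+\rho(\mu_\ep,\nu_\ep)]\to0$ and, along a subsequence, convergence of both base points to a common maximizer of $u-v^\theta$ at $\hat\imath$. If that limit lies on $\{t=T\}$, the terminal inequality already forces $m\le0$; otherwise $t_\ep,s_\ep<T$ for small $\ep$ and the viscosity inequalities apply. Freezing one pair of variables at a time, the partial gauges touch $u$ from above and $v^\theta$ from below, producing admissible jet elements whose time-derivatives both equal $\tfrac1\ep(t_\ep-s_\ep)$ and whose measure-derivatives both equal $p_\ep(x):=\tfrac1\ep\sum_j c_j\lla\mu_\ep-\nu_\ep,f_j\rra f_j(x)$.

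Subtracting the subsolution inequality at $(t_\ep,\mu_\ep,\hat\imath)$ from the strict supersolution inequality at $(s_\ep,\nu_\ep,\hat\imath)$, the time-derivative terms cancel and leave
\[
\theta\le\big[\ch(s_\ep,\nu_\ep,\hat\imath,p_\ep)-\ch(t_\ep,\mu_\ep,\hat\imath,p_\ep)\big]+\big[\mathrm{coupling}_u-\mathrm{coupling}_{v^\theta}\big].
\]
The coupling difference regroups as $[u(\cdot,j_0)-v^\theta(\cdot,j_0)]-[u(\cdot,\hat\imath)-v^\theta(\cdot,\hat\imath)]$ summed against $q_{\hat\imath j_0}$; by maximality of $\hat\imath$ the off-diagonal rates $q_{\hat\imath j_0}\ge0$ while, using u.s.c./l.s.c.\ and the convergence of both base points, this bracket is $\le m-m=0$ in the limit, so the term has the favourable sign. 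Writing $\ch=\sup_v\{-\lla\cdot,f+\cl_t^{\cdot,\hat\imath,v}[p_\ep]\rra\}$ and bounding $\sup-\sup$ by the supremum of the difference, the Hamiltonian gap splits into an outer-measure part $\lla\mu_\ep-\nu_\ep,\cl_t^{\cdot,\hat\imath,v}[p_\ep]\rra$, which is $O(\tfrac1\ep\rho(\mu_\ep,\nu_\ep))$, and a coefficient part controlled via Assumption~\ref{hyp:main}(ii),(iv) by $(|t_\ep-s_\ep|+d(\mu_\ep,\nu_\ep))\sum_j c_j|\lla\mu_\ep-\nu_\ep,f_j\rra|$; since both $d(\mu_\ep,\nu_\ep)$ and $\sum_j c_j|\lla\mu_\ep-\nu_\ep,f_j\rra|$ are $O(\rho(\mu_\ep,\nu_\ep)^{1/2})$ by Cauchy--Schwarz, this part is again $O(\tfrac1\ep\rho(\mu_\ep,\nu_\ep))\to0$. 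Letting $\ep\to0$ gives $\theta\le0$, contradicting $\theta>0$; hence $m\le0$, i.e.\ $u\le v^\theta$, and $\theta\downarrow0$ yields $u\le v$.

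The main obstacle is precisely the control of $\cl_t^{\mu,i_0,v}[p_\ep]$: one must show that applying the drift, the second-order diffusion, and the nonlocal jump operator to $p_\ep$ keeps it a $c_j$-weighted combination of the $\{f_j\}$, so that the ostensibly $O(1/\ep)$ singular contributions telescope into a bilinear form dominated by $\tfrac1\ep\rho(\mu_\ep,\nu_\ep)$. This is exactly what the closure of $\Theta$ under differentiation and under the jump-shift map $g\mapsto\sum_k m_k g^{(k)}$ (Definition~\ref{def:Theta}(i),(ii)), together with the weight comparability $c_j\le c_i$, are designed to provide; the remaining ingredients are the routine compact-space doubling estimates.
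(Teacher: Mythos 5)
Your proposal is correct in its architecture and is, at bottom, the same argument the paper invokes: the paper's proof is a pointer to \cite[Theorem~8.1]{soner}, i.e.\ the doubling-of-variables scheme on the compact set $\overline{\co}_M$ with the quadratic gauge $\sum_j c_j\lla\mu-\nu,f_j\rra^2$, the strict-supersolution perturbation in $t$, and the Hamiltonian-gap estimate driven by the closure of $\Theta$ under differentiation and the jump-shift map together with $c_j\le c_i$. You have reconstructed that scheme faithfully, including the key observation (which the paper only states in passing) that both partial gauges produce the \emph{same} measure-derivative $p_\ep$, so the singular $O(1/\ep)$ contributions pair into $\lla\mu_\ep-\nu_\ep,\cl_t^{\cdot,\hat\imath,v}[p_\ep]\rra$ and are absorbed by $\tfrac1\ep\rho(\mu_\ep,\nu_\ep)\to0$. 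The one genuine divergence is the treatment of the regime variable. The paper freezes $i_0$, sets $u(\cdot,\cdot)\equiv u(\cdot,\cdot,i_0)$, $v(\cdot,\cdot)\equiv v(\cdot,\cdot,i_0)$, and claims the single-regime proof suffices; this is consistent with its Definition~3.4, in which the coupling sum $\sum_{j_0}q_{i_0j_0}(\cdot)$ does not appear in the sub/supersolution inequalities. You instead keep the coupling term, choose $\hat\imath$ as a regime maximizing $\max_{\overline{\co}_M}(u-v^\theta)$, and use $q_{\hat\imath j_0}\ge0$ for $j_0\ne\hat\imath$ together with upper semicontinuity of $u-v^\theta$ to show the coupling difference has the favourable sign in the limit. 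This is the standard comparison argument for weakly coupled systems, and it is the more robust route: it proves comparison for the equation \eqref{eq:HJB} as actually written (coupling included), whereas the paper's reduction is only licensed by its coupling-free solution notion. The cost is nil; the only caveat is that, like the paper, you leave the $\sup-\sup$ Hamiltonian estimate at the level of a description rather than a computation, which is acceptable given that both defer the same technical core to the structure of $\Theta$ established in \cite{soner}.
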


\begin{proof}
We argue by contradiction. That is, we assume that there exists $i_0 \in \cs$ such that $\sup_{\overline{\co}_M} (u-v)(\cdot, \cdot,i_0) > 0$. Then the proof is similar to \cite[Theorem 8.1]{soner}. We only need a few modifications to factor in the additional argument for the state of the Markov chain and the additional finite sum in the operator $\cl_t^{\mu, v, i}$ arising due to the presence of the the Markov chain, if it appears.   In particular, one can take $u(\cdot, \cdot) \equiv u(\cdot, \cdot, i_0)$ and $v(\cdot, \cdot) \equiv v(\cdot, \cdot, i_0)$ and the proof of \cite[Theorem 8.1]{soner} suffices. 
\end{proof}

\begin{proposition}\label{prop:Lipsct}
The value function $V$ is the unique viscosity solution to \eqref{eq:HJB} on $\co$  satisfying $V^{ \ast} (T, \mu , i_0) = V_{\ast} (T, \mu ,i_0) = \lla \mu, h(T, \cdot, \mu, i_0) \rra$ for $(T, \mu, i_0 ) \in  \co \times \cs$ . Moreover, $V(\cdot, \cdot, i_0)$ restricted to $\co$  is Lipschitz continuous in $\mu$ and $\frac12$-Holder continuous in time.
\beq 
\label{Lip:V}
|V(t,\mu, i_0) - V(s,\nu, i_0)| \leq C \big( d(\mu, \nu) + \sqrt{|t-s|} \big)
\eeq 
where $C$ and $d$ depend on $M$, $\mu \in \mathcal{O}_M$, but $\nu \in \mathcal{O}$.
\end{proposition}
\begin{proof}
The proof of uniqueness follows from the comparison principle. 
%The proof of Lipschitz continuity follows from the deterministic nature of the control problem. 
Since coefficients are $W_2$-Lipschitz, it is easy to see that $V$ is $W_2$-Lipschitz in $\mathcal{O}$ and so, thanks to the dynamic programming principle, it is $\frac12$-Holder continuous in time. 
%\end{proof}

%\subsection{Proof of Lipschitz continuity} 

%We assume that the coefficients do not depend on $x$, as in \cite{soner}, so that they are indeed non-random. 

To show that  $V$ is Lipschitz in $\mu$ with respect to the distance $d$,  
%We consider for the moment, for simplicity, the case with no jumps and no regime switching. 
recall from Assumption \ref{hyp:main} that the coefficients $b$ and $\sigma$ (which do not depend on $x$) and the cost coefficients 
$\mu \rightarrow \langle f (s, \cdot, \mu, v,i), \mu \rangle$ and 
$\mu \rightarrow \langle h (T, \cdot, \mu, i), \mu \rangle$ are Lipschitz with respect to a finite number of moments. 
This assumption gives Lipschitz continuity of the coefficients for $d$. Indeed, since all monomials are contained in $\Theta$ and 
$c(x^n) \leq c( C_{n,k} x^k)$, where $C_{n,k} x^k$ denotes the $n-k$ derivative of $x^n$,  for any $\mu,\nu\in\mathcal{O}$ we have  
\begin{align*}
|g(\mu) - g(\nu)| &\leq \kappa_1 \sum_{k=1}^n |\langle \mu-\nu, x^k \rangle| \leq \kappa_1 \sum_{k=1}^n \frac{1}{C_{n,k}}\frac{c(C_{n,k} x^k)}{c(C_{n,k} x^k)}|\langle \mu-\nu, C_{n,k} x^k \rangle| \\
&\leq \kappa_1 \frac{C_n}{c(x^n)}  \sum_{k=1}^n c(C_{n,k} x^k)|\langle \mu-\nu, C_{n,k} x^k \rangle|
\leq \kappa_1 C_n  d(\mu, \nu) ,
\end{align*}
where $C_n$ denote a constant which depends just on $n$ an $M$. 

%\red{cancel from here}
%Moreover, since measures belong to $\mathcal{O}_M$, so that $\langle \mu , e_\delta \rangle \leq M e^{K^* T}$, the coefficients are Lipschitz also with respect to the 2-Wasserstein metric $W_2$. Indeed, we have 
%$ \langle \mu , x^k \rangle \leq C_{\delta,k} \langle \mu , e_\delta \rangle$ and thus, denoting by $\xi$ and $\tilde{\xi}$ two random variables with law $\mu$ and $\tilde{\mu}$ such that $W_2(\mu, \tilde{\mu})= \E |\xi-\tilde{\xi}|^2$, we have 
%\begin{align*}
%|g(\mu)-g(\tilde{\mu})| &\leq \kappa_1 \sum_{k=1}^n | \E (\xi^k) -\E(\tilde{\xi}^k) |  = \kappa_1 \sum_{k=1}^n \E \Big[ (\xi - \tilde{\xi}) \Big( 
%\sum_{j=0}^{k-1} \xi^j \tilde{\xi}^{k-1-j} 
%\Big) \Big] \\
%& \leq \kappa_1 \sum_{k=1}^n  \sqrt{ \E |\xi-\tilde{\xi}|^2 } C_{n,k}
%\leq C_n W_2(\mu, \tilde{\mu}), 
%\end{align*} 
%where we used H\"older's inequality.
%\red{finish here}

To prove Lipschitz-continuity in the measure, fix $i_0\in \mathcal{S}$, $t\in[0,T]$,  $\mu\in \mathcal{O}_M$, consider a control $\alpha$ $\epsilon$-optimal for $(t,\mu)\in \mathcal{O}_M$ and consider another point $\tilde{\mu} \in \mathcal{O}$. Then 
\begin{align*}
V(t,\tilde{\mu}, i_0) &- V(t, \mu,i_0) \leq J(t, \tilde{\mu}, v) 
-J(t, \mu, v) +\epsilon \\ 
& \leq \E \int_t^T  
f( s, \tilde{X}_s, \tilde{\mu}_s, v_s, \alpha_{s-} )
- f( s, X_s, \mu_s, v_s, \alpha_{s-} ) ds 
\\&+ h (T, \tilde{X}_T, \tilde{\mu}_T, \alpha_T ) - 
h (T, \tilde{X}_T, \tilde{\mu}_T, \alpha_T ) +\epsilon\\
&\leq C \sup_{t\leq s \leq T} \E d(\tilde{\mu}_s,  \mu_s) +\epsilon, 
\end{align*}
%and we have 
%\begin{align*}
%|\E(\tilde{X}_s^k) - \E(X_s^k) | &= 
%\bigg| \E \bigg( \xi + \int_t^s b( \mathcal{L}(\tilde{X}_s), v_s) ds + \int_t^s \sigma( \mathcal{L}(\tilde{X}_s), v_s) dB_s
%\bigg)^k \\
%& \quad - \E \bigg( \xi + \int_t^s b( \mathcal{L}(X_s), v_s) ds + \int_t^s \sigma( \mathcal{L}(X_s), v_s) dB_s
%\bigg)^k \bigg| \\
%&= |
%\end{align*}
where $\mu_s = \mathcal{L}(X_s | \alpha_{s-})$, $\tilde{\mu}_s = \mathcal{L}(\tilde{X}_s | \alpha_{s-})$. 
Denoting $\E^\alpha = \E[\cdot |\alpha]$ and $\xi$, $\tilde{\xi}$ such that 
$\mathcal{L}(\xi) =\mu$, $\mathcal{L}(\tilde{\xi}) =\tilde{\mu}$, by It\^o's formula we get (almost surely)
\begin{align*}
	\allowdisplaybreaks
&d(\tilde{\mu}_s,  \mu_s) = \sum_j c_j | \E^\alpha[f_j(\tilde{X}_s) - f_j(X_s)] | \\ 
&\leq \sum_j c_j | \E[f_j(\tilde{\xi}) - f_j(\xi)] | \\
&+ \sum_j c_j \Big| \E^\alpha \int_t^s  \Big\{  f_j'(\tilde{X}_r) b(r, \tilde{\mu}_r, v_r, \alpha_r)  - f_j'(X_r) b(r, \mu_r, v_r, \alpha_r) \\
& \qquad + \frac12 f_j'' (\tilde{X}_r) \sigma^2 (r, \tilde{\mu}_r, v_r, \alpha_r) 
-\frac12 f_j'' (X_r) \sigma^2 (r, \mu_r, v_r, \alpha_r)  \\
& \qquad +\lambda (r, \tilde{\mu}_r, v_r, \alpha_r) \int_\R (f(\tilde{X}_r +y) -f(\tilde{X}_r) ) \gamma(dy)  - \lambda (r, \mu_r, v_r, \alpha_r) \int_\R (f(X_r +y) -f(X_r) ) \gamma(dy) \Big\}dr \Big|.  
\end{align*}
Since 
\[
g_j(x):= \int_\R (f_j(x+y)-f_j(x))\gamma(dy) = \sum_{i=1}^{\mathrm{deg}(f_j)} \frac{f_j^{(i)}(x)}{i!} \int_\R y^i \gamma(dy) =  \sum_{i=1}^{\mathrm{deg}(f_j)} m_i f_j^{(i)}(x),
 \]
the definition of $c_j (b)$ (with $b=M e^{K^*T}$), and the property $\sum_j c_j \langle \mu, f_j \rangle ^2 \leq 1$, as $\mu\in \mathcal{O}_M$, and the boundedness of $b$,  $\sigma$ and $\lambda$ yield 
\begin{align*} 
&	d(\tilde{\mu}_s,  \mu_s) \\
& \leq d(\mu, \tilde{\mu}) + \int_t^s \Big\{ \sum_j c_j |\E^\alpha [f_j'(\tilde{X}_r) -f_j'(X_r) ]| |b|_\infty 
+ C\sum_j c_j |\E^\alpha[f_j'(X_r)] | 
d(\mathcal{L}(\tilde{X}_r),  \mathcal{L}(X_r)) \\
& \qquad + \frac12  \sum_j c_j |\E^\alpha [f_j''(\tilde{X}_r) -f_j''(X_r) ]| |\sigma^2|_\infty 
+ C\sum_j c_j |\E^\alpha[f_j''(X_r)] | d(\mathcal{L}(\tilde{X}_r),  \mathcal{L}(X_r)) \\
&+ \sum_j c_j |\E^\alpha [g_j(\tilde{X}_r) - g_j(X_r) ]| |\lambda|_\infty 
+ C \sum_j c_j |\E^\alpha[g_j(X_r)] | 
d(\mathcal{L}(\tilde{X}_r),  \mathcal{L}(X_r)) 
\Big\} dr \\
& \leq d(\mu, \tilde{\mu}) + C \int_t^s d(\mathcal{L}(\tilde{X}_r),  \mathcal{L}(X_r)) dr, 
\end{align*}
%since $b$ and $\sigma$ are bounded and $\sum_j c_j \langle \mu, f_j \rangle ^2 \leq 1$.
 Thus Gronwall's lemma, taking expectation, gives the claim. 
\end{proof}

%\textcolor{red}{A: This is the bound we can't use for the $N$-player value function, because empirical measures don't remain in $\mathcal{O}_M$}.

%We thus have the following: 

%\begin{lemma}
%	The value function $V$ is Lipschitz continuous in $\mathcal{O}_M$: 
%	\beq 
%	|V(t,\mu) - V(t,\nu)| \leq C d(\mu, \nu),
%	\eeq 
%	where $C$ and $d$ depend on $M$, $\mu \in \mathcal{O}_M$, but $\nu \in \mathcal{P}(\R)$. 
%\end{lemma}

\section{Finite agent centralized control problem}\label{sec:finite-agent}
\subsection{Problem setup and viscosity solution definition}
We assume the following state dynamics {interpreted in the weak sense} for an $N-$agent system similar to the controlled McKean-Vlasov jump-diffusion \eqref{eq:SDE}
\beq\label{eq:sde-n}
dX_s^k = b(s, X_s^k, \mu_s^N, v_s^N, \al_{s-}) ds + \si(s, X_s^k, \mu_s^N, v_s^N, \al_{s-})dB_s^k + dJ_s^k,  \quad s > t, \quad k=1, \ldots, N,
\eeq
with initial condition ${X}_s^k = x_k$ and where $\{B^k\}_{k}$ are independent Brownian motions, $\{J^k\}_{k}$ are independent purely discontinuous processes with controlled intensity $\la(s, X_s^k, \mu_s^N, v_s, \al_{s-})$ and the jump sizes are i.i.d. from the distribution $\ga \in \cp(\R)$ satisfying Assumption~\ref{hyp:main}(iii). The Markov chain $\al$ initialized at $\al_t = i_0$ is the noise common to all agents, and is the same as in \eqref{eq:SDE}. {Admissible controls are of the form $v_s^N = v(s, \mu_s^N, \al_{s-})$ taking values in the Polish space $A$. The coefficients $b$, $\si$, $\la$, and the distribution $\ga$ all satisfy the same Assumption~\ref{hyp:main}. The same is true for the running cost $f$ and the terminal cost $h$ in the value function
\beq\label{eq:value-n}
u^N(t, \bx, i_0) = \inf_{v^N \in \ca}   \dfrac{1}{N} \sum_{k=1}^N \E \lc \int_t^T f(s, X_s^k, \mu_s^N, v_s^N, \al_{s-})ds + h(T, X_T^k, \mu_T^N, \al_T) \rc,
\eeq
where $\mathbf{X}_t = \bx \in \R^N$.
The corresponding HJB turns out to be
\begin{multline}\label{eq:HJB-n}
-\dfrac{\pl}{\pl t} u^N(t, \bx, i_0) +  \sup_{v} \dfrac{1}{N} \sum_{k=1}^N H_k^v (t, \bx, i_0, Nu^N, NDu^N, ND^2 u^N)\\ - \sum_{j_0 \in \cs} q_{i_0 j_0} \lc u^N(t, \bx, j_0) - u^N(t, \bx, i_0) \rc = 0, \qquad (t, \bx, i_0) \in [0,T] \times \R^N, \times \cs,
\end{multline}
where 
\begin{multline*}%\label{eq:H_k^v}
H_k^v(t, \bx, i_0, u, \ga, \tilde{\ga}) =  \lc-\lc f(t, x_k, \mu^N(\bx), v, i_0) + b(t, x_k, \mu^N(\bx), v, i_0) \ga_i\right.\right.\\ + \left.\left.\dfrac{1}{2} \si^2 (t, x_k, \mu^N(\bx), v, i_0) \tilde{\ga}_{ii} + \la(t, x_k, \mu^N(\bx), v, i_0) \int_{\R} \lc u^N(t, \bx + e_k y, i_0) - u^N(t, \bx, i_0) \rc \ga(dy) \rc \rc,
\end{multline*}
where $\mu^N(\bx) = \dfrac{1}{N} \sum_{k=1}^N \der_{x_k}$. 
Denote 
\beq\label{eq:H^v}
H^{v} := \frac{1}{N}  \sum_{k=1}^N H_k^v
\eeq 
and $H := \sup_v H^v$. 

\begin{definition}
%\hb{(simplify to parabolic)}
(i) 
A function $u:[0,T] \times \R^N \times \cs \to \R$ is a viscosity subsolution of \eqref{eq:HJB-n} if whenever $\phi \in \cac^{1,2}([0,T] \times \R^N \times \cs)$ and $(u^{\ast} - \phi)(\cdot, \cdot, i_0)$ has a local maxima at $(t, \bx) \in [0,T]\times \R^N$, %for all $i_0 \in \cs$, 
then
$$
-\dfrac{\pl}{\pl t} \phi(t, \bx, i_0) + H(t, \bx, i_0, N\phi, ND\phi, ND^2\phi) + \sum_{j_0 \in \cs} q_{i_0 j_0} \lc u(t, \bx, j_0) - u(t, \bx, i_0) \rc \leq 0.
$$
%for all $i_0 \in \cs$. 

(ii)
A function $u:[0,T] \times \R^N \times \cs \to \R$ is a viscosity supersolution of \eqref{eq:HJB-n} if whenever $\phi \in \cac^{1,2}([0,T] \times \R^N \times \cs)$ and $(u_{\ast} - \phi)(\cdot, \cdot, i_0)$ has a local minima at $(t, \bx) \in [0,T]\times \R^N$, %for all $i_0 \in \cs$, 
then
$$
-\dfrac{\pl}{\pl t} \phi(t, \bx, i_0) + H(t, \bx, i_0, N\phi, ND\phi, ND^2\phi) + \sum_{j_0 \in \cs} q_{i_0 j_0} \lc u(t, \bx, j_0) - u(t, \bx, i_0) \rc \geq 0.
$$
%for all $i_0 \in \cs$. 

(iii)
A function $u$ is a viscosity solution of \eqref{eq:HJB-n} if it is both a viscosity subsolution and a viscosity supersolution of \eqref{eq:HJB-n}. 
\end{definition}

\begin{notation}\label{not:emp-proj}
We use the following notation to transform $u^N$ to a function on $\co_M \times \cs$:
\beq\label{eq:hat{u}^N}
\hat{u}^N(t, \mu^N(\bx), i_0) := u^N(t, \bx, i_0), \text{ for } (t, \bx, i_0) \in [0,T] \times \R^N \times \cs.
\eeq
In addition the empirical projection of any $\varphi$ is given by
$$
\tilde{\varphi}^N(t, \bx, i_0) = \varphi(t, \mu^N(\bx), i_0), \text{ for } (t, \bx, i_0) \in [0,T] \times \R^N \times \cs.
$$
Let also 
\[\co^N = \lcl (t, \mu) \in \co : \mu = \frac{1}{N} \sum_{i=1}^N \der_{x_i} \text{ for some } \bx \in \R^N  \rcl
\]
%\begin{comment}
%\begin{align*}
%\tilde{\phi}^n(t, \bx, i_0) &= {\phi}^n(t, \mu^N(\bx), i_0)\\
%\tilde{u}^n(t, \bx, i_0) &= V(t, \mu^N(\bx), i_0),
%\end{align*}
%\end{comment}
be the set of empirical measures with finite exponential moment, and 
$$
\co_M^N = \lcl (t, \mu) \in \co_M : \mu = \frac{1}{N} \sum_{i=1}^N \der_{x_i} \text{ for some } \bx \in \R^N  \rcl.
$$
\end{notation}

%The following notations for the state space of $\hat{u}^N$ will be used in the sequel. 
%\begin{notation}
%Let
%$$
%\co_M^N = \lcl (t, \mu) \in \co_M : \mu = \frac{1}{N} \sum_{i=1}^N \der_{x_i} \text{ for some } \bx \in \R^N  \rcl.
%$$
%Furthermore 
%$$
%{B_M^N(t)} = \lcl \mu \in \co_M^N : (t,\mu) \in \co_M^N \rcl.
%$$
%\end{notation}
%Whenever required we will consider the following {interpolation of $\hat{u}^N$} to the entire $\co_M$.  Namely, for $(t, \mu) \in \co_M$ consider
%$$
%A_t^{\mu} = \lcl \bx \in B_M^N(t) : d(\mu, \mu^N(\bx)) \leq  d(\mu, \mu^N(\by)) \quad \forall \by \in B_M^N(t)\rcl,
%$$
%and define with an abuse of notation:
%$$
%\hat{u}^N(t, \mu, i_0) = \sup_{\bx \in A_{t}^{\mu}} {u}^N(t, \bx, i_0). %= u^n(t, \mu^N(\bx^{\mu}), i_0),
%$$
%where $\bx^{\mu}$ is some point in $A_t^{\mu}$ where the supremum above is attained. Unless I am mistaken $d(\mu, \mu^N(\bx^{\mu})) \leq \frac{C}{n}$.

%\hb{The following proposition follows from Assumption~\ref{hyp:main} and the \sout{deterministic} nature of the control problem.}

%\red{A: It seems that the interpolation is not needed in the end}

%\begin{proposition}
%The value function $u^N$ is the unique viscosity solution to the HJB \eqref{eq:HJB-n}. Furthermore $\hat{u}^N$ is Lipschitz continuous for $(t, \mu) \in [0, T-\der)\times \mathcal{O}$ for all $\der >0$, while it is H\"older-continuous in time with regularity $\frac{1}{2}$ for $t =T$. 
%\end{proposition}

Let us remark that every empirical measure has exponential moments, since it is a finite measure. Moreover, the value function $\hat{u}^N$ is defined on $\co^N$ and not on $\co_M^N$, because the latter set is not invariant for the dynamics of the empirical measure process, while it is invariant for the limiting dynamics.  

\begin{proposition}\label{prop:Was-Lip}
	The value function $u^N$ is the unique viscosity solution to the HJB \eqref{eq:HJB-n}. Furthermore $\hat{u}^N$ is Lipschitz continuous in $\mu\in \mathcal{O}$ for $W_2$ and is $\frac{1}{2}$-H\"older-continuous in $[0,T]$:
	\beq 
	|\hat{u}^N (t, \mu^N(\bd x)) - \hat{u}^N (s, \mu^N(\bd y)) | \leq C
	 \big(  W_2 (\mu^N(\bd x), \mu^N(\bd y)) + |t-s|^{\frac12} \big) .
	 \eeq
\end{proposition}

%We remark that
%Recall that the set $\co_M^N$ is not invariant for the dynamics of the empirical measure.

\begin{proof}
In a setting without regime switches, comparison principle for the viscosity solution of \eqref{eq:HJB-n} is true \cite{pham-viscosity}. Consequently the comparison principle when the regime switches are present follows just as in the Proof of Proposition~\ref{thm:comparison}. Then by an application of the Stochastic Perron's method \cite{bayraktar-li, stoch-perron} we have uniqueness of $u^N$. 
%\red{Lipschitz continuity, which we prove below, too follows from classical results (see e.g. \cite{buckdahn}).}

\noindent
Since coefficients are $W_2$-Lipschitz, it is easy to prove that $\hat{u}^N$ is $W_2$-Lipschitz in $\mu$. 
%\[
%|\hat{u}^N (t, \mu^N(\bd x)) - \hat{u}^N (t, \mu^N(\bd y)) | \leq C W_2 (\mu^N(\bd x), \mu^N(\bd y)) .
%\]
From the dynamic programming principle, it then follows the 1/2 Holder-continuity in time. 
\end{proof}

\subsection{Convergence to mean field control}

%\hb{Can we just assume $\mu \in \cm_{MK_* T}$, rather than tracking time $t$ all the time?}

\begin{lemma}\label{lem:c}
The following relations hold (see for example \cite{carmona-delarue-volm1})% \hb{give reference}:
\begin{align*}
\dfrac{\pl}{\pl x_i} {\phi}^N(t, \bx, i_0) &= \dfrac{1}{N} \dfrac{\pl}{\pl y} D_m \tilde\phi^N(t, \mu^N(\bx), i_0, x_i),\\
\dfrac{\pl^2}{\pl x_i^2} {\phi}^N(t, \bx, i_0) &= \dfrac{1}{N} \dfrac{\pl^2}{\pl y^2} D_m \tilde\phi^N (t, \mu^N(\bx), i_0, x_i) + \dfrac{1}{N^2} \dfrac{\pl}{\pl y'} \dfrac{\pl}{\pl y} D_{m^2}^2 \tilde\phi^N(t, \mu^N(\bx), i_0, x_i, x_i),
\end{align*}
where $\phi^N$ and $\tilde{\phi}^N$ are related by
$$
{\phi}^N(t, \bx, i_0) = \tilde{\phi}^N(t, \mu^N(\bx), i_0).
$$
\end{lemma}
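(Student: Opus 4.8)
The plan is to derive both identities directly from the definition of the linear functional derivative, by perturbing a single coordinate $x_i$ and passing to a difference quotient; as indicated by the cited \cite{carmona-delarue-volm1}, these are the standard finite-dimensional/empirical-measure correspondences, and the only adaptation is to carry the frozen variables $t$ and $i_0$ along inertly, since they play no role in the differentiation.

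First I would fix $t$ and $i_0$ (and suppress them), and write $\bx^h$ for the vector obtained from $\bx$ by replacing $x_i$ with $x_i+h$, so that $\mu^N(\bx^h) = \mu^N(\bx) + \frac1N(\der_{x_i+h} - \der_{x_i})$. Applying the defining relation for $D_m$ with $\mu = \mu^N(\bx^h)$ and $\mu' = \mu^N(\bx)$, the signed measure $\mu-\mu' = \frac1N(\der_{x_i+h}-\der_{x_i})$ contracts the integral over $\R$ to a two-point difference:
\[ \phi^N(\bx^h) - \phi^N(\bx) = \frac{1}{N}\int_0^1 \lc D_m\tilde\phi^N(\mu_r^h, x_i+h) - D_m\tilde\phi^N(\mu_r^h, x_i) \rc dr, \quad \mu_r^h := r\mu^N(\bx^h) + (1-r)\mu^N(\bx). \]
Dividing by $h$ and sending $h \to 0$, the convex combination $\mu_r^h$ converges to $\mu^N(\bx)$ uniformly in $r$ and the bracketed difference quotient converges to $\pl_y D_m\tilde\phi^N(\mu^N(\bx), x_i)$; the $r$-integral over $[0,1]$ then collapses, giving the first identity.

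For the second identity I would differentiate $g_i(\bx) := \pl_{x_i}\phi^N(\bx) = \frac1N \pl_y D_m\tilde\phi^N(\mu^N(\bx), x_i)$ once more in $x_i$. The crucial point is that $x_i$ now enters $g_i$ in \emph{two} places: through the empirical measure $\mu^N(\bx)$ and through the frozen spatial slot. Hence the increment $g_i(\bx^h) - g_i(\bx)$ splits into a ``spatial'' part (measure held at $\mu^N(\bx^h)$, spatial slot moved from $x_i$ to $x_i+h$) and a ``measure'' part (spatial slot frozen at $x_i$, measure moved from $\mu^N(\bx)$ to $\mu^N(\bx^h)$). The spatial part, divided by $h$, tends to $\frac1N \pl_y^2 D_m\tilde\phi^N(\mu^N(\bx), x_i)$, producing the first term. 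For the measure part I apply the defining relation for the second linear functional derivative $D_{m^2}^2\tilde\phi^N$ — the linear derivative of $\mu \mapsto D_m\tilde\phi^N(\mu,\cdot)$ — again contracting $\mu-\mu' = \frac1N(\der_{x_i+h}-\der_{x_i})$ to a two-point difference in the \emph{new} spatial argument, commuting the $\pl_y$ of the frozen slot past the functional derivative, and letting $h\to0$. This yields $\pl_{y'}\pl_y D_{m^2}^2\tilde\phi^N(\mu^N(\bx), x_i, x_i)$, and the extra factor $1/N$ from the contracted signed measure, combined with the $1/N$ in the definition of $g_i$, produces the $1/N^2$ prefactor of the second term.

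The main obstacle is purely the analytic bookkeeping: one must check that $D_m\tilde\phi^N$ and $D_{m^2}^2\tilde\phi^N$ are smooth enough in their spatial arguments and jointly continuous in the measure argument to justify interchanging the $h\to0$ limit with the $r$-integral and commuting $\pl_y$ with the linear functional derivative. Given the cylindrical-polynomial structure of the admissible test functions $\varphi \in \Phi_{\co_M \times \cs}$ (for which $D_m$ and $D_{m^2}^2$ are themselves polynomials in the spatial variables), these requirements hold automatically, so the computation goes through exactly as in \cite{carmona-delarue-volm1} with the regime variable $i_0$ carried along unchanged.
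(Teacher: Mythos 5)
Your argument is correct: perturbing the single coordinate $x_i$, using $\mu^N(\bx^h)-\mu^N(\bx)=\tfrac1N(\der_{x_i+h}-\der_{x_i})$ to contract the defining integral for $D_m$ to a two-point difference, and then splitting the increment of $\pl_{x_i}\phi^N$ into a spatial part and a measure part handled via $D_{m^2}^2$ is exactly the standard derivation, and your accounting of the $1/N$ and $1/N^2$ prefactors is right. The paper itself offers no proof of this lemma --- it simply cites \cite{carmona-delarue-volm1}, where this same computation is carried out --- so your write-up is a faithful filling-in of the omitted argument; your closing caveat about the joint continuity of $\pl_y D_m\tilde\phi^N$ and $\pl_{y'}\pl_y D_{m^2}^2\tilde\phi^N$ needed to pass to the limit under the $r$-integral is the right one to flag, and it is indeed satisfied for the (series of) cylindrical polynomials and the smooth functions of Assumption~\ref{hyp:poc} to which the lemma is applied.
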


\begin{remark}
In the following we will often use the following distance like quantity instead of $d^2(\mu, \nu)$:
$$
\hat{d}(\mu, \nu) = \sum_{j=1}^{\infty} c_j {\lla \mu - \nu, f_j \rra}^2.
$$
We stress that $d$ depends on $M$, which is the bound on the exponential moments of the measure, because the ${c_j}$ do. 
It is readily checked using Cauchy-Schwarz inequality and the relation $\sum_{j=1}^{\infty} c_j \leq 1$, that
\beq\label{eq:d-dhat}
d^2(\mu, \nu) \leq \hat{d} (\mu, \nu). 
\eeq
\end{remark}

%\hb{Do we need:
%$$
%d(\mu, \nu) =\sqrt{\lp \sum_{j=1}^{\infty} c_j  {\lla \mu- \nu, f_j \rra}^2 \rp}
%$$
%Is $\sum_{i \in \ci} |\lla \mu-\nu, x^i \rra | \leq C d(\mu, \nu)$ for some $C$ depending atmost on $\ci$?
%}

\begin{theorem}\label{thm:1}
	For any $M >0$ we have 
\beq\label{eq:thm1}
\sup_{(t, \mu, i_0) \in \co_M^N \times \cs} \lln V(t, \mu, i_0)- \hat{u}^N(t, \mu, i_0) \rrn \leq \dfrac{C_M}{N^{\frac14}},
\eeq
for a constant $C_M$ depending on $M$. 
\end{theorem}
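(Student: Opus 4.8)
The plan is to establish the two one-sided estimates $\hat u^N - V \le C/\sqrt N$ and $V - \hat u^N \le C/\sqrt N$ separately, each by a doubling-of-variables argument that plays the viscosity sub/supersolution property of $u^N$ for \eqref{eq:HJB-n} against that of $V$ for \eqref{eq:HJB}. Both $V(\cdot,\cdot,i_0)$ and $\hat u^N(\cdot,\cdot,i_0)$ are Lipschitz in $(t,\mu)$ by the continuity results proved above, and on $\co_M^N$ they coincide at $t=T$ (there $u^N(T,\bx,i_0)=\langle\mu^N(\bx),h(T,\cdot,\mu^N(\bx),i_0)\rangle=V(T,\mu^N(\bx),i_0)$), so the terminal data contribute no error and the whole discrepancy is driven by the mismatch between the finite-agent and mean-field Hamiltonians, which Lemma~\ref{lem:c} makes explicit.

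For the bound $\hat u^N - V \le C/\sqrt N$ I would, for $\ep>0$, maximise over $(t,\mu,i_0)$ with $(t,\mu)\in\co_M^N$ and $(s,\nu,i_0)$ with $(s,\nu)\in\co_M$ (the same regime $i_0$ in both slots) the penalised functional $\hat u^N(t,\mu,i_0)-V(s,\nu,i_0)-\tfrac{1}{2\ep}\bigl(|t-s|^2+\hat d(\mu,\nu)\bigr)$, where $\hat d$ is the smooth surrogate appearing in \eqref{eq:d-dhat}; being a convergent sum of squares of linear functionals, $\hat d(\cdot,\nu)$ is a legitimate element of $\Phi_{\co_M\times\cs}$ and, after the empirical projection $\bx\mapsto\mu^N(\bx)$, the map $\bx\mapsto\hat d(\mu^N(\bx),\nu)$ is $\cac^{1,2}$ on $\R^N$. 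At a maximiser $(t_\ep^\ast,\mu_\ep^\ast,s_\ep^\ast,\nu_\ep^\ast)$ the Lipschitz bounds force $|t_\ep^\ast-s_\ep^\ast|+d(\mu_\ep^\ast,\nu_\ep^\ast)=O(\ep)$, and I would apply the subsolution inequality for $u^N$ at $(t_\ep^\ast,\bx_\ep^\ast)$ (with $\mu^N(\bx_\ep^\ast)=\mu_\ep^\ast$) and the supersolution inequality for $V$ at $(s_\ep^\ast,\nu_\ep^\ast)$. The regime-switching sums $\sum_{j_0}q_{i_0 j_0}[\cdot]$ are absorbed exactly as in Proposition~\ref{thm:comparison}, by selecting the index $i_0$ at which the chain term has the favourable sign.

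The heart of the matter is the Hamiltonian comparison. Writing $\phi^N(t,\bx)=\tfrac{1}{2\ep}\hat d(\mu^N(\bx),\nu_\ep^\ast)+\tfrac{1}{2\ep}|t-s_\ep^\ast|^2$, Lemma~\ref{lem:c} converts $H(t_\ep^\ast,\bx_\ep^\ast,i_0,N\phi^N,ND\phi^N,ND^2\phi^N)$ into the mean-field expression $-\langle\mu_\ep^\ast, f+\cl_t^{\mu_\ep^\ast,i_0,v}[D_m\phi^N]\rangle$ plus error terms: the drift contribution matches exactly, the diffusion contribution carries a remainder $\tfrac1N\cdot\tfrac1N\sum_k\tfrac12\si^2(x_k)\,\partial_{y'}\partial_y D_{m^2}^2$, and the jump contribution a further $O(1/N)$ remainder coming from the second-order expansion of $\hat d$ along the single-particle shifts $\mu^N(\bx)\mapsto\mu^N(\bx)+\tfrac1N(\der_{x_k+y}-\der_{x_k})$. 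Since $\hat d(\cdot,\nu)/\ep$ has second linear functional derivative of size $O(1/\ep)$, these remainders are of order $1/(N\ep)$, not $1/N$. Subtracting the two viscosity inequalities and using the Lipschitz modulus of $\ch$ in $(t,\mu)$ then yields $\hat u^N-V\le C\bigl(\ep+\tfrac1{N\ep}\bigr)$, and optimising at $\ep=N^{-1/2}$ gives the rate $C/\sqrt N$; the reverse estimate is obtained identically after exchanging the sub- and supersolution roles.

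The step I expect to be the main obstacle is precisely this Hamiltonian comparison: one must check that the single-particle jump increments of $u^N$ reproduce, to leading order, the nonlocal mean-field term $\la\int[D_m V(\mu,\cdot+y)-D_m V(\mu,\cdot)]\ga(dy)$ with a remainder controlled uniformly in $\ep$ up to the $1/(N\ep)$ factor, while simultaneously verifying that $\hat d$ satisfies the summability conditions defining $\Phi_{\co_M\times\cs}$ on the compact set $\co_M$ (this is where the decay $c_j\le 2^{-j}$ and boundedness of the $f_j$ on $\co_M$ enter). Ensuring every constant stays independent of both $\ep$ and $N$, and carrying the finite regime-switching sum through all of this, is where the bookkeeping is delicate.
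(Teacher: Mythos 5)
Your overall strategy is the same as the paper's: doubling of variables with the smooth surrogate $\hat d$ from \eqref{eq:d-dhat} as penalization, Lemma~\ref{lem:c} to rewrite the finite-agent Hamiltonian as the mean-field one plus remainders, error terms of size $C(\ep + \tfrac{1}{N\ep})$, and optimization at $\ep = N^{-1/2}$. The bounds you anticipate for the individual pieces (the $O(\ep)$ Lipschitz mismatch, the $O(1/(N\ep))$ second-derivative and jump remainders) match the paper's $I_1$, $I_2$, $I_3$.

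However, there is a genuine gap at the step where you write that ``subtracting the two viscosity inequalities \dots yields $\hat u^N - V \le C(\ep + \tfrac{1}{N\ep})$.'' Subtracting the subsolution inequality for $u^N$ at $(t_\ep^\ast,\bx_\ep^\ast)$ from the supersolution inequality for $V$ at $(s_\ep^\ast,\nu_\ep^\ast)$ produces an inequality between the two Hamiltonians only; the time-derivatives of the penalization cancel, and you are left with $0 \le C(\ep + \tfrac{1}{N\ep})$, which is vacuous and says nothing about the value $\hat u^N(t_\ep^\ast,\mu_\ep^\ast) - V(s_\ep^\ast,\nu_\ep^\ast)$, let alone about the supremum. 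The paper closes this by inserting into the doubled functional the additional term $-\frac{2T-t-s}{4T}E_N^{+}$ (see \eqref{eq:main-a}), whose time-derivatives contribute $+\frac{1}{4T}E_N^{+}$ to each of the two viscosity inequalities, so that adding them yields $\frac{1}{2T}E_N^{+} \le I_1+I_2+I_3$; equivalently one could perturb $V$ to a strict supersolution by $\delta(T-t)$ and argue that for $\delta$ above the Hamiltonian error the maximum must migrate to $t=T$. Some such device is indispensable, and your sketch contains none. Relatedly, once that term is present the case where the maximizer sits at $\bar t = T$ or $\bar s = T$ must be treated separately (Step 7, Cases I--II of the paper): your claim that ``the terminal data contribute no error'' is only exact on $\co_M^N$, whereas the $V$-argument of the doubled functional ranges over all of $\co_M$, so the terminal mismatch is $O(d(\bar\mu,\bar\nu)+|T-\bar s|) = O(\ep)$ and still has to be combined with the coefficient $\tfrac12$ in front of $E_N^{+}$ coming from the extra penalization term to conclude $E_N^{+}\le C\ep_N$ in that case.
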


\begin{remark}
	\label{remark:carda-souga}
	The exponent $1/4$ comes from the $\frac12$-H\"older continuity in time of the value functions, as it is clear from the proofs. Assuming more regularity of the coefficients, we may obtain Lipschitz regularity in time; in the case without jumps and regime switching, such regularity is proved in \cite{carda-souga-1}. Thus, in this case, we can obtain $N^{-\frac12}$ as convergence rate.  
\end{remark}

\begin{proof}
We use the following notations:
\begin{align*}
E_N^{+} &= \sup_{(t, \mu, i_0) \in \co_M^N \times \cs} \lp V(t, \mu, i_0) - \hat{u}^N(t, \mu, i_0) \rp,\\
E_N^{-} &= \sup_{(t, \mu, i_0) \in \co_M^N \times \cs} \lp \hat{u}^N(t, \mu, i_0) - V(t, \mu, i_0) \rp.
\end{align*}
In order to prove \eqref{eq:thm1} it is enough to show that each of $E_N^{+}$ and $E_{N}^{-}$ satisfy the same bound. 
In the following we will only show that 
\beq\label{eq:E-ub-0}
E_N^{+} \leq \frac{C}{N^{\frac14}}
\eeq
and the other case can be done similarly. Note that $E_N^{+}$ can be taken to be positive as otherwise inequality \eqref{eq:E-ub-0} holds trivially.
The proof has been broken into parts for ease in reading. 
We fix $M>0$ and define the distance $d$ according to the constant $b=Me^{K^*} T$.
Please note that in the following, the constants $C$ may depend on $M$ and might change from line to line, but they are not renamed. 

%\hb{$n$ to $N$ starting here}
\noindent
\emph{Step 1: Doubling of variables.}
For a positive sequence $(\ep_N, \eta_N) \to (0,0)$, we define the following map on $(\co_M \times \cs)\times (\mathcal{O}^N \times \cs)$, where $\mathcal{P}^N$ is the set of empirical measures $\nu^N(\bd x)$, $\bd x \in \R^N$ : %\hb{is it okay to define this function on $\co_M \times \cs \times \co_M^N \times \cs$ we might circumvent the problem in Step 7 Case III (i).}:
\begin{multline}\label{eq:main-a}
\Phi_{N}(t, \mu, i_0, s, \nu, j_0) = V(t, \mu, i_0) - \hat{u}^N (s, \nu, j_0) - \dfrac{1}{2\ep_N} \hat{d}(\mu, \nu) -\dfrac{1}{2 \ep_N} (t-s)^2\\ - \dfrac{1}{2 \ep_N} \tilde d(i_0, j_0) - \dfrac{2T-t-s}{4T} E_N^{+}
- \eta_N \log\Big( 1+\sum_j c_j \lla \nu, f_j \rra^2 \Big),
\end{multline}
where $\hat{u}^N$ is defined above. Note that we add the penalization because $\mathcal{O}^N$ is not compact. 

%\red{A: Otherwise, we may penalize with $\log\big( 1+\lla \nu, e_\delta \rra \big)$ }

%\noindent
%\emph{Step 2: Interpolation of $\tilde{u}^n$.} 
%For $(t, \mu) \in \co_M$
%$$
%A_t^{\mu} = \lcl \bx \in B_M^n(t) : d(\mu, \mu^N(\bx)) \leq  d(\mu, \mu^N(\by)) \quad \forall \by \in B_M^n(t)\rcl.
%$$
%Then consider the following interpolation scheme:
%$$
%\hat{u}^N(t, \mu, i_0) = \sup_{\bx \in A_{t}^{\mu}} u^n(t, \mu^N(\bx), i_0) = u^n(t, \mu^N(\bx^{\mu}), i_0),
%$$
%where $\bx^{\mu}$ is some point in $A_t^{\mu}$ where the supremum above is attained. Unless I am mistaken $d(\mu, \mu^N(\bx^{\mu})) \leq \frac{C}{n}$.

\noindent
\emph{Step 2: Maximum attained.} 
The maximum of $\Phi_{N}$ is attained and at some $(\bar{t}, \bar{\mu}, \bar{i_0}, \bar{s}, \bar{\nu}, \bar{j_0})$. This is because $\co_M$ is  compact, 
$\lim_{|\bd x| \rightarrow +\infty } \Phi_{N}(t, \mu, i_0, s, \nu^N(\bd x), j_0) = -\infty$
and $\Phi_{N}$ is continuous in $t, \mu, s, \nu$. 

\noindent
\emph{Step 3: Bound on $d(\bar{\mu}, \bar{\nu})$.}
Since $\Phi_{N}(\bar t, \bar \mu, \bar i_0, \bar s, \bar \nu, \bar j_0) \geq \Phi_{N}(\bar t, \bar \nu, \bar i_0, \bar s, \bar \nu, \bar j_0)$, from the Lipschitz continuity of $V$ in \eqref{Lip:V}, we have
$$
\dfrac{1}{2 \ep_N} \hat{d} (\bar \mu, \bar \nu) \leq V(\bar t, \bar \mu, \bar i_0) - V(\bar t, \bar \nu, \bar i_0) \leq C d(\bar \mu, \bar \nu) \leq C \sqrt{\hat d (\bar \mu, \bar \nu)}.
$$
This implies 
$$
\sqrt{\hat d (\bar \mu, \bar \nu)} \leq C \ep_N,
$$
which using relation \eqref{eq:d-dhat} implies 
$$
d(\bar \mu, \bar \nu) \leq C\ep_N.
$$

\noindent
\emph{Step 4: Bound on $|\bar t - \bar s|$.}
Since $\Phi_{N}(\bar t, \bar \mu, \bar i_0, \bar s, \bar \nu, \bar j_0) \geq \Phi_{N}(\bar s, \bar \mu, \bar i_0, \bar s, \bar \nu, \bar j_0)$, we have that
$$
\dfrac{1}{2 \ep_N} (\bar t - \bar s)^2 \leq V(\bar t, \bar \mu, \bar i_0) - V(\bar s, \bar \mu, \bar i_0) + \dfrac{\bar t - \bar s}{4T} E_N^{+} \leq C|\bar t - \bar s|^{\frac12} + \dfrac{\bar t - \bar s}{4T} E_N^{+},
$$
where we have utilized the H\"older property of $V$. Since $E_N^{+}$ is bounded ($V$ and $u^N$ both bounded on $\co_M^N \times \cs$) we obtain
$$
|\bar t - \bar s|^{\frac12} \leq C \ep_N^{\frac13}.
$$

\noindent
\emph{Step 5: Bound on $\tilde d (i_0, j_0)$.}
Since $\Phi_{N}(\bar t, \bar \mu, \bar i_0, \bar s, \bar \nu, \bar j_0) \geq \Phi_{N}(\bar t, \bar \mu, \bar j_0, \bar s, \bar \nu, \bar j_0)$, we have that
$$
\dfrac{1}{2 \ep_N} \tilde d (i_0, j_0) \leq V(\bar t, \bar \mu, \bar i_0) - V(\bar t, \bar \mu, \bar j_0).
$$
Since the right hand side is bounded we have 
$$
\tilde d(i_0, j_0) \leq C \ep_N.
$$
Since $\ep_N \to 0$, for $N$ large enough we have that $\bar i_0 = \bar j_0$. In the sequel we will assume that $N$ is large enough so that this is indeed the case.

\noindent
\emph{Step 6 Case I: $\bar t=T$.} For $(t, \mu, i_0) \in \co_M^N \times \cs$, we have
$\Phi_{N} (\bar t, \bar \mu, \bar i_0, \bar s, \bar \nu, \bar j_0) \geq \Phi_{N} (t, \mu, i_0, t, \mu, i_0)$. This implies
\begin{align*}
&V(t, \mu, i_0) - \hat{u}^N(t, \mu, i_0)\\ 
&\leq \dfrac{2T-2t}{4T} E_N^{+} + V(\bar t, \bar \mu, \bar i_0) - \hat{u}^N(\bar s, \bar \nu, \bar j_0) - \dfrac{1}{2 \ep_N} \hat d(\bar \mu, \bar \nu) - \dfrac{1}{2 \ep_N} (\bar t -\bar s)^2 - \dfrac{\bar t - \bar s}{4 T} E_N^{+}\\
& \qquad -  \eta_N \log\Big( 1+\sum_j c_j \lla \bar{\nu}, f_j \rra^2 \Big) 
+  \eta_N \log\Big( 1+\sum_j c_j \lla \mu, f_j \rra^2 \Big) \\
&\leq \dfrac{1}{2} E_N^{+} + V(\bar t, \bar \mu, \bar i_0) - \hat{u}^N(\bar s, \bar \nu, \bar j_0)
+ \eta_N \log\Big( 1+\sum_j c_j \lla \mu, f_j \rra^2 \Big).
\end{align*}
We notice that
\begin{align*}
V(\bar t, \bar \mu, \bar i_0) - \hat{u}^N(\bar s, \bar \nu, \bar j_0) 
&= \lla \bar \mu, h(T, \cdot, \bar \mu, \bar i_0) \rra - \lla \bar \nu, h(T, \cdot, \bar \nu, \bar i_0) \rra\\
%&+ \lla \bar \nu, h(T, \cdot, \bar \mu, \bar i_0) \rra - \lla \mu^N(\bx^{\bar \nu}), h(T, \cdot, \mu^N(\bx^{\bar \nu}), \bar i_0) \rra\\
&+ \hat{u}^N(T, \bar \nu, \bar j_0) - \hat{u}^N(\bar s, \bar \nu, \bar j_0).
\end{align*}
Using the Lipschitz assumption on $h$ and Holder continuity of $u^N$ in time, we get
$$
V(\bar t, \bar \mu, \bar i_0) - \hat{u}^N(\bar s, \bar \nu, \bar j_0)  \leq C\lp d(\bar \mu, \bar \nu) %+ d(\bar \nu, \mu^N(\bx^{\bar \nu})) 
+ |T-\bar s|^{\frac12} \rp \leq C \ep_N^{\frac13},
$$
as long as $\ep_N \gtrsim \frac{1}{N}$. Now taking supremum over $(t, \mu, i_0)$, and recalling that $\sum_j c_j \lla \mu, f_j \rra^2 \leq 1$ if $\mu\in \mathcal{O}_M$,  we obtain
\beq 
\label{eq:V-lim-0}
E_N^{+} \leq C\big( \ep_N^{\frac13} +\eta_N \big) + \frac{1}{2} E_N^{+}  \implies E_N^{+} \leq C \big( \ep_N^{\frac13} +\eta_N\big). 
\eeq

\noindent
\emph{Step 7 Case II: $\bar s =T$.} Similar to Case I.

\noindent
\emph{Step 7 Case III: $0 \leq \bar t, \bar s < T$.} We use the viscosity solution properties. 

\noindent
(i) $\hat{u}^N - \vp$ has a minimum at $(\bar s, \bar \nu, \bar j_0)$ where
\[
\begin{split}
\vp(s, \nu, j_0) & = V(\bar t, \bar \mu, \bar i_0) - \dfrac{1}{2\ep_N} \hat{d}(\bar \mu, \nu) - \dfrac{1}{2 \ep_N} (\bar t -s)^2 
\\&- \dfrac{1}{2 \ep_N} \tilde d (\bar i_0, j_0) - \dfrac{2T-\bar t -s}{4T} E_N^{+}
-\eta_N \log\Big( 1+\sum_j c_j \lla \nu, f_j \rra^2 \Big).
\end{split}
\]
%Since $\bar{\nu} \in \co_M^N$, we have 
Recall that $\bar{\nu} = \mu^N(\bx^{\bar{\nu}})$ for some $\bx^{\bar{\nu}} \in \R^n$. Let us now define the function 
$$
\tilde{\vp}^N (s, \bx, j_0) := \vp(s, \mu^N(\bx), j_0).
$$
It is readily checked that $\tilde{\vp}^N \in \cac^{1,2}([0,T] \times \R^N \times \cs)$.
This implies
\begin{multline}\label{eq:ineq-i}
\dfrac{\bar s - \bar t}{\ep_N} - \dfrac{1}{4T} E_N^{+} + \dfrac{1}{N} \sup_v \sum_{k=1}^N H_k^v(\bar s, \bx^{\bar\nu}, \bar j_0, N\tilde{\vp}^N, ND\tilde{\vp}^N, N D^2 \tilde{\vp}^N)\\ + \sum_{j_0 \in \cs} q_{i_0 j_0} \lc \tilde{\vp}^N(\bar s, \bx^{\bar \nu}, j_0) - \tilde{\vp}^N(\bar s, \bx^{\bar \nu}, \bar j_0) \rc \geq 0.
\end{multline}
It is readily checked that 
$$
 \sum_{j_0 \in \cs} q_{i_0 j_0} \lc \tilde{\vp}^N(\bar s, \bx^{\bar \nu}, j_0) - \tilde{\vp}^N(\bar s, \bx^{\bar \nu}, \bar j_0) \rc = \dfrac{1}{2 \ep_N} \sum_{j_0 \in \cs} q_{\bar i_0 j_0} \mathbf{1}_{j_0 \neq \bar j_0}.
$$
Here using Lemma~\ref{lem:c} and the definition of linear derivative
\begin{multline*}
\dfrac{1}{N} \sup_v \sum_{k=1}^N H_k^v(\bar s, \bx^{\nu}, \bar j_0, N\tilde{\vp}^N, ND\tilde{\vp}^N, N D^2 \tilde{\vp}^N) \\
= \dfrac{1}{N} \sup_v \sum_{k=1}^N \lc p(\bar s, x_k^{\bar \nu}, {\bar \nu}, v, \bar j_0, D_m \vp^N) - r(\bar s, x_k^{\bar \nu}, {\bar \nu}, v, \bar j_0, D_m {\vp}) \rc\\
= \sup_v \lla \bar \nu, \lc p(\bar s, \cdot, \bar \nu, v, \bar j_0, D_m \vp^N) - r(\bar s, \cdot, \bar \nu, v, \bar j_0, D_m \vp^N, D_{m^2}^2 \vp^N) \rc \rra
\end{multline*}
where
$$
p(t, x, \nu,  v, i_0, {D_m \phi^N}) = - \lp f(t, x, \nu, v, i_0) + \cl_t^{\nu, i_0, v}[D_m \phi^N](x) \rp %f()+b()\dfrac{\pl}{\pl x} D_m \psi^N + \dfrac{\si^2}{2} () \dfrac{\pl^2}{\pl x^2} D_m \psi^N \\+ \la() \lc \int_{\R} D_m \psi^N(\ldots, x_k+y) - D_m \psi^N (\ldots, x_k)\rc \ga(dy),
$$
and 
\begin{multline*}
r(t, x, \nu, v, i_0, {D_m \phi^N}, D_{m^2}^2 \phi^N) = \dfrac{1}{N} \dfrac{\si^2}{2}(t,x,\nu,v,i_0) \dfrac{\pl}{\pl y} \dfrac{\pl}{\pl y'} D_{m^2}^2 \phi^N(t, \nu, i_0, x, x)\\ 
+ \la(t, x, \nu, v, i_0) \lc \int_{\R} \int_0^1 \lcl D_m \phi^N(t, \nu + \dfrac{r}{N}(\der_{x +y} - \der_{x}), i_0, x+y)\right.\right.\\
\left. \left. -  D_m \phi^N(t, \nu + \dfrac{r}{N}(\der_{x +y} - \der_{x}), i_0, x)\rcl dr \right. \\
\left. -\lp  D_m \phi^N(t, \nu, i_0, x+y) - D_m \phi^N (t, \nu, i_0, x) \rp \rc \ga(dy)
\end{multline*}

%$$
%\lln p^k(\bar s, \bx^{\bar \nu}, v, \bar j_0, D_m \vp^N) \rrn \leq C \ep_N
%$$

\noindent
(ii) $V-\psi$ has a maximum at $(\bar t, \bar \mu, \bar i_0)$ where 
\[
\begin{split}
\psi(t, \mu, i_0) &= \hat{u}^N(\bar s, \bar \nu, \bar j_0) + \dfrac{1}{2 \ep_N} d^2(\mu, \bar \nu) + \dfrac{1}{2 \ep_N} (t - \bar s)^2 + \dfrac{1}{2 \ep_N} \tilde d(i_0, \bar j_0) 
\\&+ \dfrac{2T-\bar s -t}{4T} E_N^{+} + \eta_N \log\Big( 1+\sum_j c_j \lla \bar \nu, f_j \rra^2 \Big).
\end{split}
\]
This implies 
$$
-\pl_t \psi(\bar t, \bar \mu, \bar i_0) + H(\bar t, \bar \mu, \bar i_0, D_m \psi) + \sum_{j_0 \in \cs} q_{\bar i_0 j_0} \lc \psi(\bar t, \bar \mu, j_0) - \psi(\bar t, \bar \mu, \bar i_0) \rc \leq 0.
$$
Consequently
\beq\label{eq:ineq-ii}
-\dfrac{\bar t - \bar s}{\ep_N} + \dfrac{1}{4T}E_N^{+} + H(\bar t, \bar \mu, \bar i_0, D_m \psi) + \sum_{j_0 \in \cs} q_{\bar i_0 j_0} \lc \psi(\bar t, \bar \mu, j_0) - \psi(\bar t, \bar \mu, \bar i_0) \rc \leq 0.
\eeq
Here it is readily checked that
$$
\sum_{j_0 \in \cs} q_{\bar i_0 j_0} \lc \psi(\bar t, \bar \mu, j_0) - \psi(\bar t, \bar \mu, \bar i_0) \rc = \dfrac{1}{2 \ep_N} \sum_{j_0 \in \cs} q_{\bar i_0 j_0} \mathbf{1}_{\{j_0 \neq \bar i_0\}}.
$$
In addition
we have
\begin{multline*}
H(\bar t, \bar \mu, \bar i_0, D_m \psi) =  \sup_v \lla \bar \mu, - \lcl f(\bar t, \cdot, \bar \mu, v, \bar i_0) + \cl_{\bar t}^{\bar \mu, \bar i_0, v} [D_m \psi] \rcl \rra\\
 = \sup_v \lla \bar \mu, p(\bar t, \cdot, \bar \mu, v, \bar i_0, D_m \psi) \rra.
\end{multline*}
%Since $\bar \mu \in \co_N$, $f_j, f_j', f_j''$ are polynomials, and relevant functions are bounded we have that
%$$
%\lln H(\bar t, \bar \mu, \bar i_0, D_m \psi) \rrn \leq C \sum_{j=1}^\infty \lla \bar \mu - \bar \nu , f_j \rra \leq C \ep_N.
%$$

\noindent
(iii) Adding \eqref{eq:ineq-i} and \eqref{eq:ineq-ii} we obtain that
\begin{align}\label{eq:E-ub}
&\dfrac{1}{2T} E_N^{+} \leq  \sup_v\lla \bar \nu, ~ \lc p(\bar s, \cdot, \bar \nu, v, \bar j_0, D_m \vp^N) - r(\bar s, \cdot, \bar \nu, v, \bar j_0, D_m \vp^N) \rc \rra \nonumber\\
& - \sup_v \lla \bar \mu, p(\bar t, \cdot, \bar \mu, v, \bar i_0, D_m \psi) \rra \nonumber \\
&\leq \sup_v \lla \bar \nu, \lcl p(\bar s, \cdot, \bar \nu, v, \bar j_0, D_m \vp^N) - p(\bar t, \cdot, \bar \mu, v, \bar i_0, D_m \psi) \rcl \rra - \inf_v \lla \bar \mu - \bar \nu, p(\bar t, \cdot, \bar \mu, v, \bar i_0, D_m \psi) \rra \nonumber \\
& - \inf_v \lla \bar \nu,  r(\bar s, \cdot, \bar \nu, v, \bar j_0, D_m \vp^N) \rra \nonumber \\
&=: I_1 + I_2 + I_3.
\end{align}

%\hb{will try to shorten the proof below using Lipschitz continuity of $H$}

\noindent
Assumption~\ref{hyp:main} gives that the function
$\lla  \mu, f(\bar t, \cdot, \mu, v, \bar i_0)\rra$ is Lipschitz for $d$. Thus 
\[
\lla \bar \mu, f(\bar t, \cdot, \bar \mu, v, \bar i_0)\rra - \lla \bar \nu, f(\bar s, \cdot, \bar \nu, v, \bar i_0)\rra
\leq C(|\bar t- \bar s| + d (\bar \mu, \bar \nu)) \leq C \ep_N^{\frac23}.
\]

\noindent
\emph{Bound for $I_1$.} 
%Observe that from Assumption~\ref{hyp:main}
%$$
%\lln f(s, x, \bar \nu, \bar i_0, v, i_0) - f(\bar t, x, \bar \mu, v, \bar j_0) \rrn \leq C(|\bar t- \bar s| + d (\bar \mu, \bar \nu)) \leq C \ep_N^{\frac23}.
%$$
%Furthermore, 
We have
\beq\label{eq:Dm-vp}
D_m \vp(t, \nu, j_0, x) = \dfrac{1}{\ep_N} \sum_{j=1}^{\infty} c_j \lla \bar \mu - \nu , f_j\rra f_j(x) 
- 2 \eta_N \frac{\sum_j c_j \lla \mu, f_j \rra f_j(x)}{1+\sum_j c_j \lla \mu, f_j \rra^2 },
\eeq
\beq\label{eq:Dm-psi}
D_m \psi(t, \mu, i_0, x) = \dfrac{1}{\ep_N} \sum_{j=1}^{\infty} c_j \lla \mu - \bar \nu , f_j\rra f_j(x),
\eeq
and, since $\bar\mu\in \mathcal{O}_M$ 
\beq 
\label{eq:424}
\sum_j c_j \lla \bar \nu, f_j \rra^2 \leq 2 \hat{d}(\bar \mu, \bar \nu) 
+ 2 \sum_j c_j \lla \bar \mu, f_j \rra^2 \leq 2C \ep_N^2 +2 \leq 3  
\eeq
if $N$ is large enough. 
We compute
\begin{align*}
& \lln b(\bar s,  \bar \nu, v, \bar i_0) \dfrac{\pl}{\pl x}D_m \vp^N(\bar s, \bar \nu, \bar i_0, x) - b(\bar t,  \bar \mu, v, \bar j_0) \dfrac{\pl }{\pl x}D_m \psi(\bar t, \bar \mu, \bar i_0, x)\rrn\\
& \leq \lln b(\bar s,  \bar \nu, v, \bar i_0) - b(\bar t,  \bar \mu, v, \bar j_0) \rrn \dfrac{1}{\ep_N} \sum_{j=1}^{\infty} c_j \lln \lla \bar \mu - \bar \nu, f_j \rra f_j'(x) \rrn 
+ |b(\bar s,  \bar \nu, v, \bar i_0)| 2 \eta_N \frac{\sum_j c_j |\lla \bar\nu, f_j \rra f_j'(x)|}{1+\sum_j c_j \lla \bar\nu, f_j \rra^2 }
\\
& \leq C(|\bar t -\bar s| + d(\bar \mu, \bar \nu)) \dfrac{1}{\ep_N} \sum_{j=1}^{\infty} c_j \lln \lla \bar \mu - \bar \nu, f_j \rra f_j'(x) \rrn 
+C\eta_N  \frac{\sum_j c_j |\lla \bar\nu, f_j \rra f_j'(x)|}{1+\sum_j c_j \lla \bar\nu, f_j \rra^2 }\\
&\leq \frac{C}{\ep_N^{\frac13}} \sum_{j=1}^{\infty} c_j \lln \lla \bar \mu - \bar \nu, f_j \rra  f_j'(x)\rrn +C\eta_N  \frac{\sum_j c_j |\lla \bar\nu, f_j \rra f_j'(x)|}{1+\sum_j c_j \lla \bar\nu, f_j \rra^2 }.
\end{align*}
Similarly we obtain that
\begin{align*}
&\lln \dfrac{\si^2}{2}(\bar s,  \bar \nu, v, \bar i_0) \dfrac{\pl^2}{\pl x^2}D_m \vp^N(\bar s, \bar \nu, \bar i_0, x) - \dfrac{\si^2}{2}(\bar t, x, \bar \mu, v, \bar j_0) \dfrac{\pl^2 }{\pl x^2}D_m \psi(\bar t, \bar \mu, \bar i_0, x)\rrn\\
& \leq \frac{C}{\ep_N^{\frac13}} \sum_{j=1}^{\infty} c_j \lln \lla \bar \mu - \bar \nu, f_j \rra f_j''(x) \rrn
+C\eta_N  \frac{\sum_j c_j |\lla \bar\nu, f_j \rra f_j''(x)|}{1+\sum_j c_j \lla \bar\nu, f_j \rra^2 },
\end{align*}
and 
\begin{align*}
&\lln \lambda(\bar s, x, \bar \nu, v, \bar i_0) \int_{\R} \lp D_m \vp^N(\bar s, \bar \nu, \bar i_0, x+y) - D_m \vp^N(\bar s, \bar \nu, \bar i_0, x) \rp \ga(dy)\right.\\
&\left.  - \lambda(\bar t, x, \bar \mu, v, \bar i_0) \int_{\R} \lp D_m \psi (\bar t, \bar \mu, \bar i_0, x+y) - D_m \psi(\bar t, \bar \mu, \bar i_0, x) \rp \ga(dy) \rrn\\
& \leq \frac{C}{\ep_N^{\frac13}}  \sum_{j=1}^{\infty} c_j \lln \lla \bar \mu - \bar \nu, f_j \rra  \int_{\R} \lp f_j(x+y) - f_j(x) \rp \ga(dy) \rrn 
+C\eta_N  \frac{\sum_j c_j |\lla \bar\nu, f_j \rra \int_{\R} \lp f_j(x+y) - f_j(x) \rp \ga(dy) |}{1+\sum_j c_j \lla \bar\nu, f_j \rra^2 }
\\
&= \frac{C}{\ep_N^{\frac13}} \sum_{j=1}^{\infty} c_j \lln \lla \bar \mu - \bar \nu, f_j \rra g_j(x) \rrn
+C\eta_N  \frac{\sum_j c_j |\lla \bar\nu, f_j \rra g_j(x) |}{1+\sum_j c_j \lla \bar\nu, f_j \rra^2 },
\end{align*}
where the last equality follows by Taylor's expansion and the polynomial 
$$
g_j(x) = \int_{\R} \lp f_j(x+y) - f_j(x) \rp \ga(dy) =\sum_{i=1}^{\infty} m_i f_j^{(i)}(x).
$$
Consequently
\begin{align*}
&\lln p(\bar s, x, \bar \nu, v, \bar j_0, D_m \vp^N) - p(\bar t, x, \bar \mu, v, \bar i_0, D_m \psi) \rrn\\
 &\leq \frac{C}{\ep_N^{\frac13}}   \sum_{j=1}^{\infty} c_j \lln \lla \bar \mu - \bar \nu, f_j \rra  \lp f_j'(x) + f_j''(x) + g_j(x) \rp \rrn 
+C\eta_N  \frac{\sum_j c_j |\lla \bar\nu, f_j \rra \lp f_j'(x) + f_j''(x) + g_j(x) \rp|}{1+\sum_j c_j \lla \bar\nu, f_j \rra^2 } .
\end{align*}
This implies by Cauchy-Schwarz inequality
\begin{align}\label{eq:V-lim-a}
\lln I_1 \rrn &\leq \frac{C}{\ep_N^{\frac13}} {\lp \sum_{j=1}^{\infty} c_j {\lla  \bar \mu - \bar \nu, f_j \rra}^2   \sum_{j=1}^{\infty} c_j \lp {\lla \bar \nu, f_j' \rra}^2 + {\lla \bar \nu, f_j'' \rra}^2 + {\lla \bar \nu, g_j \rra}^2 \rp \rp}^{1/2} \nonumber\\
&\qquad +C\eta_N  \frac{\lp  \sum_{j=1}^{\infty} c_j { \lla\bar \nu, f_j \rra}^2   \sum_{j=1}^{\infty} c_j \lp {\lla \bar \nu, f_j' \rra}^2 + {\lla \bar \nu, f_j'' \rra}^2 + {\lla \bar \nu, g_j \rra}^2 \rp \rp^{1/2}}{1+\sum_j c_j \lla \bar\nu, f_j \rra^2}
\nonumber\\ 
&\leq  \frac{C}{\ep_N^{\frac13}} (\hat{d}(\bar \mu, \bar \nu))^{1/2} +C\eta_N
 \leq C \ep_N^{\frac23} + C \eta_N,
\end{align}
since $\hat{d}(\bar \mu, \bar \nu) \leq C \ep_N^2$ and $\sum_{j=1}^{\infty} c_j {\lla \bar \nu, \phi_j \rra}^2 \leq 3 $ for $\phi_j = f_j', f_j'', g_j$ (see \cite{soner} and \eqref{eq:424}).

\noindent
\emph{Bound for $I_2$.} By Assumption~\ref{hyp:main} and similar computations as above, we have
\begin{align*}
&b(\bar t, x, \bar \mu, v, \bar i_0) \dfrac{\pl}{\pl x} D_m \psi (\bar t, \bar \mu, \bar i_0, x) \leq \dfrac{C}{\ep_N} \sum_{j=1}^{\infty} c_j \lla \bar \mu - \bar \nu, f_j \rra f_j'(x), \\
&\dfrac{\si^2}{2}(\bar t, x, \bar \mu, v, \bar i_0) \dfrac{\pl^2}{\pl x^2} D_m \psi (\bar t, \bar \mu, \bar i_0, x) \leq \dfrac{C}{\ep_N} \sum_{j=1}^{\infty} c_j \lla \bar \mu - \bar \nu, f_j \rra f_j''(x),\\
&\la(\bar t, x, \bar \mu, v, \bar i_0) \int_{\R} \lp  D_m \psi (\bar t, \bar \mu, \bar i_0, x+y) -  D_m \psi (\bar t, \bar \mu, \bar i_0, x) \rcl \ga(dy) \leq \dfrac{C}{\ep_N} \sum_{j=1}^{\infty} c_j \lla \bar \mu - \bar \nu, f_j \rra g_j(x).
\end{align*}
This implies 
\begin{align*}
I_2&= \sup_{v \in A} \lla \bar \mu- \bar \nu ,  \cl_{\bar t}^{\bar \mu, \bar i_0, v} [D_m \psi] \rra  \\
& = \frac{1}{\ep_N}  \sum_{j=1}^{\infty} c_j \lla \bar \mu - \bar \nu, f_j \rra 
\lc \lla \bar \mu - \bar \nu, b(\bar t,  \bar \mu, v, \bar i_0) f_j' \rra + \lla \bar \mu - \bar \nu, \dfrac{\si^2}{2}(\bar t, \bar \mu, v, \bar i_0) f_j'' \rra + \lla \bar \mu - \bar \nu, \la(\bar t, \bar \mu, v, \bar i_0) g_j \rra \rc
\\
&\leq \dfrac{C}{\ep_N} \sum_{j=1}^{\infty} c_j \left|\lla \bar \mu - \bar \nu, f_j \rra \lc \lla \bar \mu - \bar \nu, f_j' \rra + \lla \bar \mu - \bar \nu, f_j'' \rra + \lla \bar \mu - \bar \nu, g_j \rra \rc\right|\\
&\leq \dfrac{C}{\ep_N} {\lp \sum_{j=1}^{\infty} c_j {\lla \bar \mu - \bar \nu, f_j \rra}^2 \rp}^{1/2} \lp {\lp  \sum_{j=1}^{\infty} c_j {\lla \bar \mu - \bar \nu, f_j' \rra}^2 \rp}^{1/2} + {\lp  \sum_{j=1}^{\infty} c_j {\lla \bar \mu - \bar \nu, f_j'' \rra}^2 \rp}^{1/2}\right. \\ 
&+ \left.{\lp  \sum_{j=1}^{\infty} c_j {\lla \bar \mu - \bar \nu, g_j \rra}^2 \rp}^{1/2}\rp \leq \dfrac{C}{\ep_N} \hat{d}(\bar \mu, \bar \nu) \leq C \ep_N.
\end{align*}

%In addition for any fixed $v \in A$ by Assumption~\ref{hyp:main}

%\red{A: to compare the costs, we may consider together $I_1$ and $I_2$, as in \cite{soner}, so that we have to bound just $\lla \bar \mu, f(\bar t, \cdot, \bar \mu, v, \bar i_0)\rra - \lla \bar \nu, f(\bar t, \cdot, \bar \nu, v, \bar i_0)\rra$, and we can say tha the function $\mu\rightarrow \lla\mu,f\rra$ is Lipschitz for $d$. }
%$$
%\lla \bar \mu - \bar \nu, f(\bar t, \cdot, \bar \mu, v, \bar i_0) \rra  \leq C W_1(\bar \mu, \bar \nu) \leq C d(\bar \mu, \bar \nu) \leq C \ep_N.
%$$
%This is because $A$ is Polish and the Wasserstein metric is equivalent to the metric $d$ considered here, as $d$ metrizes weak topology. 

\noindent
Consequently we have
\beq\label{eq:V-lim-b}
|I_2| \leq C \ep_N.
\eeq

\noindent
\emph{Bound for $I_3$.} Observe that 
%$$
%D_{m^2}^2 \vp^N (\bar s, \bar \nu, \bar j_0, y, y') = \dfrac{1}{\ep_N} \sum_{j=1}^{\infty} c_j \lla \bar \mu - \bar \nu, f_j \rra f_j(y) f_j(y').
%$$ 

\begin{align*}
D_{m^2}^2 \vp^N &(\bar s, \bar \nu, \bar j_0, y, y') 
= \dfrac{1}{\ep_N} \sum_{j=1}^{\infty} c_j  f_j(y) f_j(y')  \\
&- 2\eta_N \frac{ \sum_{j=1}^{\infty} c_j  f_j(y) f_j(y') 
	\big(1+ \sum_{j=1}^{\infty} c_j \lla  \bar \nu, f_j \rra^2 \big) 
	-2  \sum_{j=1}^{\infty} c_j \lla  \bar \nu, f_j \rra f_j(y) \sum_{i=1}^{\infty} c_i \lla  \bar \nu, f_i \rra f_i(y') }{\big(1+ \sum_{j=1}^{\infty} c_j \lla  \bar \nu, f_j \rra^2 \big)^2} \\
& = r^N_1 +\eta_N r^N_2
\end{align*}

%\textcolor{red}{ALEKOS: I think there is no term $ \lla \bar \mu - \bar \nu, f_j \rra$, but I don't understand how to go from the first to the second line. Indeed, it is not clear that polynomial like $f_j'^2$ are still in the set $\Theta$ considered.}

We  bound the second term as 
\begin{align*}
&\lla \bar \nu,  \dfrac{\pl}{\pl y} \dfrac{\pl}{\pl y'} r^N_2 (\bar\nu ,x,x) \rra  \\
&\qquad\leq 2\frac{ \sum_{j=1}^{\infty} c_j  \lla \bar \nu , f_j'^2  \rra
	\big(1+ \sum_{j=1}^{\infty} c_j \lla  \bar \nu, f_j \rra^2 \big) 
	+ 2  \big(\sum_{j=1}^{\infty} c_j  \lla  \bar \nu, f_j \rra \lla \bar \nu , f_j'^2  \rra ^{\frac12} \big)^2 }{\big(1+ \sum_{j=1}^{\infty} c_j \lla  \bar \nu, f_j \rra^2 \big)^2} \\
&\qquad\leq 2\frac{ \sum_{j=1}^{\infty} c_j  \lla \bar \nu , f_j  \rra
	\big(1+ 3\sum_{j=1}^{\infty} c_j \lla  \bar \nu, f_j \rra^2 \big) }{\big(1+ \sum_{j=1}^{\infty} c_j \lla  \bar \nu, f_j \rra^2 \big)^2} \\
&\leq 6 \frac{ \Big(\sum_{j=1}^{\infty} c_j \lla  \bar \nu, f_j \rra^2 \Big)^{\frac12} }{1+ \sum_{j=1}^{\infty} c_j \lla  \bar \nu, f_j \rra^2 } \leq 6,
\end{align*}
while the first term is bounded by 
\begin{align*}
\lla \bar \nu, \dfrac{1}{N} r^N_1 (\bar\nu,x,x) \rra &= \dfrac{1}{N \ep_N} \sum_{j=1}^{\infty} c_j  \lla \bar \nu, (f_j')^2 \rra\\
&\leq \dfrac{1}{N \ep_N} \sum_{j=1}^{\infty} c_j \lla \bar \nu, f_j \rra \\
%& \leq \dfrac{1}{N \ep_N} {\lp \sum_{j=1}^{\infty} c_j {\lla \bar \mu - \bar \nu, f_j \rra}^2 \sum_{j=1}^{\infty} c_j {\lla \bar \nu, f_j \rra}^4 \rp}^{1/2}\\
%& \leq \dfrac{1}{N \ep_N} (\hat{d} (\bar \mu, \bar \nu))^{1/2} \leq \dfrac{C}{N}, 
&\leq  \dfrac{C}{N \ep_N}
\end{align*}
where we have used \eqref{eq:424}.
%the fact that $\sum_{j=1}^{\infty} c_j {\lla \bar \nu, f_j \rra}^4 \leq 1$.
 Furthermore we notice that 
\begin{align*}
I_3^1(x,y) &= \int_0^1 \lc D_m \vp^N(\bar s, \bar \nu+\dfrac{r}{N} (\der_{x+y} - \der_x), x+y) - D_m \vp^N(\bar s, \bar \nu, \bar j_0, x+y) \rc dr \\
&= \int_0^1 \dfrac{1}{\ep_N} \sum_{j=1}^{\infty} c_j \lp \lla \bar \mu - (\bar \nu + \dfrac{r}{N}(\der_{x+y} - \der_{x})), f_j \rra - \lla \bar \mu - \bar \nu, f_j \rra \rp f_j(x+y) dr\\
&= \dfrac{1}{2N\ep_N} \sum_{j=1}^{\infty} c_j \lp f_j(x+y) - f_j(x) \rp f_j(x+y).
\end{align*}
Let us omit the penalization term with the log, which is bounded in a similar way.  Similarly
\begin{align*}
I_3^2(x,y) &= \int_0^1 \lc D_m \vp^N(\bar s, \bar \nu+\dfrac{r}{N} (\der_{x+y} - \der_x), x) - D_m \vp^N(\bar s, \bar \nu, \bar j_0, x) \rc dr \\
&= \dfrac{1}{2N\ep_N} \sum_{j=1}^{\infty} c_j \lp f_j(x+y) - f_j(x) \rp f_j(x).
\end{align*}
This implies 
\begin{align*}
\int_{\R} &\lc I_3^1(x,y) - I_3^2(x,y) \rc \ga(dy) =\dfrac{1}{2N\ep_N} \sum_{j=1}^{\infty} c_j \int_{\R} {\lcl f_j(x+y) - f_j(x) \rcl}^2 \ga(dy)\\
&\leq \dfrac{1}{2N\ep_N} \sum_{j=1}^{\infty}c_j {\lp \int_{\R} (f_j(x+y) - f_j(x)) \ga (dy) \rp}^2 = \dfrac{1}{2N\ep_N} \sum_{j=1}^{\infty} c_j g_j^2(x). 
\end{align*}
Thus
$$
\lla \bar \nu, \int_{\R} [I_3^1(\cdot,y) - I_3^2(\cdot,y)] \ga(dy) \rra \leq \dfrac{1}{2N\ep_N} \sum_{j=1}^{\infty} c_j \lla \bar \nu, g_j^2 \rra \leq \dfrac{1}{2N\ep_N} \sum_{j=1}^{\infty} c_j {\lla \bar \nu, g_j \rra}^2 \leq \dfrac{1}{2N\ep_N}.
$$
Since $\la$ and $\si$ are bounded we conclude that
\beq\label{eq:V-lim-c}
|I_3| \leq \dfrac{C}{N \ep_N} + C\eta_N . 
\eeq

%\hb{Next step: Show that the right hand side is $\leq C \ep_n$ for some appropriately chosen $\ep_n \gtrsim \frac{1}{n}$, possibly with a different map $\Phi_{\ep}^N$}.
\noindent
\emph{Bound for $E_N^{+}$.} Combining \eqref{eq:V-lim-0}, \eqref{eq:V-lim-a}, \eqref{eq:V-lim-b} and \eqref{eq:V-lim-c}, we obtain from \eqref{eq:E-ub} 
\[
E_N^+ \leq  C \Big( \ep_N^{\frac13} +\eta_N + \frac{1}{N\ep_N}   \Big).
\]
By choosing the optimal $\ep_N = \frac{1}{N^{\frac34}}$ and $\eta_N$ arbitrarily, we get the   desired result
$ E_N^{+} \leq \dfrac{C}{N^{\frac14}}$.
\end{proof}

\section{Propagation of chaos}\label{sec:poc}

In this section we consider, for completeness, the case in which the value function of the limiting mean field control problem is smooth. In such case, the limiting optimal trajectory is unique and thus our aim in to prove a quantitative propagation of chaos result, that is, to bound the distance between the optimal trajectories $\mu_t^{N}$ and $\mu_t$ for the finite-agent and mean field control problems respectively. Notably, this bound involves not only the value functions, but also their  measure derivatives. The bound obtained in this section is more standard and  independent of the previous main result Theorem \ref{thm:1}, and is obtained in terms of the Wasserstein distance. 

%In this section we would like to bound the distance $d(\mu_t^{N}, \mu_t)$ between the optimal trajectories $\mu_t^{N}$ and $\mu_t$ for the finite-agent and mean field control problems respectively. 

To that aim we introduce some additional assumptions. Note that here we run our analylis on the entire $\co^N$, not on $\co^N_M$. 
%\hb{We will use the definition that a function $g$ has $\delta$-subexponential growth if $|g(x)x|\leq C \exp(\delta |x|)$ for some $C > 0$ and every $x \in \R$.}

\begin{assumption}\label{hyp:poc}
%For $\vp = \hat{u}^N, V$,  we have that $\vp(t, \mu, i_0)$ is continuously differentiable in time and twice continuously (linear functional) differentiable in the measure $\mu$. The derivative $D_m \vp$ is twice continuously differentiable and $D_{m^2}^2 \vp$ once continuously differentiable in space.
We have that $V(t, \mu, i_0)$ is continuously differentiable in time and twice continuously (linear functional) differentiable in the measure $\mu$. The derivative $D_m \vp$ is twice continuously differentiable and $D_{m^2}^2 \vp$ once continuously differentiable in space. All the derivatives are uniformly bounded. Further, $u^N \in C^{1,2}([0,T]\times \R^N)$. 
\end{assumption}

This assumption permits to improve the convergence rate for the value functions in 
 \eqref{eq:thm1}, while to bound the optimal trajectories
we also need some assumptions on the pre-Hamiltonian $\ch^{v}$ given by \eqref{eq:ch^v}.
\begin{assumption}\label{hyp:poc-H}
The pre-Hamiltonian $\ch^v(t, \mu, i_0, D_m V)$ is continuously differentiable in $v \in A$, $\pl_v \ch^v$ is Lipschitz continuous with respect to $\mu$, and $\ch^v$ is uniformly concave in $A$:
$$
\ch^{v_2}(t, \mu, i_0, D_mV) \leq \ch^{v_1} (t, \mu, i_0, D_mV) - \pl_v \ch^{v_1} (t, \mu, i_0, D_m V) |v_1 - v_2| - \la |v_1 - v_2|^2, 
$$
for some $\la>0$.
\end{assumption}
\begin{comment}
\begin{remark}
Note that under Assumption~\ref{hyp:poc}, Assumption~\ref{hyp:poc-H} follows if for $\vp = f, b, \si^2, \la$, we have $\vp$ to be continuously differentiable in $v \in A$, $\pl_v \vp$ is Lipschitz continuous with respect to $\mu$, and $\vp$ is uniformly convex in $A$:
$$
\vp(t, x, \mu, v, i_0) \geq \vp (t, x, \mu, v_1, i_0) + \pl_v \vp (t, x, \mu, v_1, i_0) |v_1 - v_2| + \la_{\vp} |v_1 - v_2|^2, 
$$
for some $\la_{\vp} > 0$.

\end{remark}
\end{comment}

Let $v^{\ast,N}$ be the optimal feedback control for the $N$-agent optimization and $v^{\ast}$ be the optimal feedback control for the the mean field control problem. The optimal controls are unique because of the assumed regularity and uniform concavity. 
%\begin{assumption}\label{hyp:poc-v}
%The optimal controls $v^{\ast}$ and $v^{\ast,N}$ are unique. Furthermore $v^{\ast}$ is Lipschitz in the space and measure arguments.
%\end{assumption}

\begin{remark}
Since $v \mapsto \ch^v$ is continuously differentiable and strictly concave, $v^{\ast}(t, \mu, i_0)$ appears as the unique solution of the equation $\pl_{v} \ch^v (t, \mu, i_0, D_mV) = 0$. By the implicit function theorem, $v^{\ast}$ is also Lipschitz continuous in the space and measure arguments. 
\end{remark}

\begin{remark}
In a setting without jumps and when the diffusion coefficient $\si$ is uncontrolled, Assumption~\ref{hyp:poc-H} can be seen in \cite{carmona-delarue-p-mfg} where the drift coefficient is assumed to be an affine function of the control and the cost function is assumed to be convex. 
\end{remark}

\begin{comment}
\begin{lemma}
Let Assumption~\ref{hyp:poc} holds. Then for $l=0,1,2$ we have
$$
\lln \dfrac{\pl^l}{\pl x^l} D_m \tilde{u}^N(t, \mu(\bx), i_0) -  \dfrac{\pl^l}{\pl x^l} D_m V (t, \mu(\bx), i_0) \rrn \leq C /N^{ \frac{1}{2^{l+2}}}
$$
\end{lemma}

\begin{proof}
From the definition of linear functional derivative for $\mu \in \cm_{M e^{K^{\ast} T}}^N$ we have  by omitting the the arguments $t$ and $i_0$
\begin{align*}
&(\hat{u}^N - V)(\mu + \ep_N \der_{y}) - (\hat{u}^N - V)(\mu) = \ep_N \int_{\R} \int_0^1 D_m(\hat{u}^N - V)(\mu+t_1 \ep_N \der_y)(y) dt_1 \\
&= \ep_N  D_m(\hat{u}^N - V)(\mu)(y) + \ep_N \int_0^1 \lp D_m(\hat{u}^N - V)(\mu+ t_1 \ep_N \der_y)(y) - D_m(\hat{u}^N - V)(\mu)(y) \rp dt_1 \\
&=\ep_N  D_m(\hat{u}^N - V)(\mu)(y) + \ep_N^2 \int_0^1 \int_0^1 D_{m^2}^2(\hat{u}^N - V)(\mu+t_1t_2 \ep_N \der_y)(y,y) dt_1 dt_2
\end{align*}
Since $D_{m^2}^2$ is continuous and hence bounded on the compact domain $\cm_{M e^{K^{\ast} T}}^N$ we have from Theorem~\ref{thm:1} that
$$
\dfrac{C}{\sqrt{N}} \geq \lln (\hat{u}^N - V)(\mu + \ep_N \der_{y}) - (\hat{u}^N - V)(\mu) \rrn \geq \ep_N \lln D_m D_m(\hat{u}^N - V)(\mu)(y) \rrn - \ep_N^2 C
$$
Now by choosing an optimal $\ep_N = 1/N^{1/4}$ we have that
$$
\lln D_m(\hat{u}^N - V)(\mu)(y) \rrn \leq \dfrac{C}{N^{1/4}}.
$$
Similarly by successive applications of Taylor's theorem we have that
$$
\lln \dfrac{\pl}{\pl y}D_m(\hat{u}^N - V)(\mu)(y) \rrn \leq \dfrac{C}{N^{1/8}}, \quad \text{ and } \quad \lln \dfrac{\pl^2}{\pl y^2}D_m(\hat{u}^N - V)(\mu)(y) \rrn \leq \dfrac{C}{N^{1/16}}
$$
\end{proof}
\end{comment}

Observe that from \eqref{eq:H^v} using Lemma~\ref{lem:c} and denoting $\vp_k(t, \bx) \equiv \vp(t, x_k, \mu^N(\bx), v, i_0)$ for $\vp = f$, $b$, $\si$, $\la$ we have
\begin{align*}
&H^v(t, \bx, i_0, Nu^N, NDu^N, ND^2u^N)% = \dfrac{1}{N} \sum_{k=1}^N \lc -\lc f_k(t,\bx) + b_k(t, \bx) N Du^N(t, \bx, i_0)\right.\right.\\
%&\left.\left. + \dfrac{\si_k^2}{2}(t, \bx) ND^2u^N(t, \bx, i_0) + \la_k(t, \bx) \int_{\R} [u^N(t, \bx + e_k y, i_0) - u^N(t, \bx, i_0)] \ga(dy) \rc \rc\\
= -\dfrac{1}{N} \sum_{k=1}^N \lc {f_k(t, \bx)} + b_k(t, \bx) \dfrac{\pl}{\pl y} D_m \hat{u}^N(t, \mu^N(\bx), i_0, x_k) \right.\\
&\left.+ \dfrac{\si_k^2}{2}(t, \bx) \lp \dfrac{\pl^2}{\pl y^2} D_m \hat{u}^N (t, \mu^N(\bx), i_0, x_k) + \dfrac{1}{N} \dfrac{\pl}{\pl y} \dfrac{\pl}{\pl y'} D_{m^2}^2 \hat{u}^N(t, \mu^N(\bx), i_0, x_k, x_k) \rp\right.\\
&\left. + \la_k(t, \bx) \int_{\R} \int_0^1 \lc D_m \hat{u}^N(t, \mu^N(\bx) + \dfrac{r}{N} (\der_{x_k+y} - \der_{x_k}), i_0, x_k+y\right.\right.)\\
& \hspace{2.2in}  \left.\left.- D_m\hat{u}^N(t, \mu^N(\bx) + \dfrac{r}{N}(\der_{x_k + y} - \der_{x_k}), i_0, x_k) \rc ds \ga(dy) \rc,
\end{align*}
where $\hat{u}^N$ is given by \eqref{eq:hat{u}^N}.
Recalling \eqref{eq:ch^v} we thus have
\begin{multline}\label{eq:H-relation}
H^v(t, \bx, i_0, Nu^N, NDu^N, ND^2u^N)= \ch^v(t, \mu^N(\bx), i_0, D_m \hat{u}^N) + R_1^v(t,\mu^N(\bx), i_0, \hat{u}^N)\\ + R_2^v(t,\mu^N(\bx), i_0, \hat{u}^N), 
\end{multline}
where 
$$
R_1^v(t, \mu^N(\bx), i_0, \hat{u}^N) = \dfrac{1}{N}\lla \mu^N(\bx), \dfrac{\pl}{\pl y} \dfrac{\pl}{\pl y'} D_{m^2}^2 \hat{u}^N(t, \mu^N(\bx), i_0, \cdot, \cdot) \rra,
$$
and
\begin{multline*}
R_2^v(t, \mu^N(\bx), i_0, \hat{u}^N) = \lla \mu^N(\bx) , \la(s, \cdot, \mu^N(\bx), v, i_0) \lc \int_{\R} \bigg[D_m \hat{u}^N(t, \mu^N(\bx), i_0, \cdot +y)\right.\right.\\
\left.\left. - \int_0^1 D_m \hat{u}^N(t, \mu^N(\bx) + \dfrac{r}{N}(\der_{\cdot + y} - \der_{\cdot}), i_0, \cdot+y) dr \bigg] \ga(dy)- \int_{\R}\bigg[ D_m \hat{u}^N(t, \mu^N(\bx), i_0, \cdot) \right.\right.\\
\left.\left. - \int_0^1 D_m \hat{u}^N(t, \mu^N(\bx) + \dfrac{r}{N}(\der_{\cdot + y} - \der_{\cdot}), i_0, \cdot)dr\bigg]\ga(dy) \rc \rra
\end{multline*}

\begin{lemma}\label{lem:R}
Under Assumption~\ref{hyp:poc} for $\mu^N(\bx) \in \co_M^N$
$$
|R_1^{v^{\ast,N}}(s, \mu^N(\bx), i_0, \hat{u}^N)| + |R_2^{v^{\ast,N}}(s, \mu^N(\bx), i_0, \hat{u}^N)| \leq \dfrac{C}{N}.
$$
As a consequence, $\tilde{V}^N$ almost solves \eqref{eq:HJB-n}:
\begin{multline}\label{eq:HJB-VN}
	-\dfrac{\pl}{\pl t} \tilde{V}^N(t, \bx, i_0) +  \sup_{v} \dfrac{1}{N} \sum_{k=1}^N H_k^v (t, \bx, i_0, N\tilde{V}^N, ND\tilde{V}^N, ND^2 \tilde{V}^N)\\ - \sum_{j_0 \in \cs} q_{i_0 j_0} \lc \tilde{V}^N(t, \bx, j_0) - \tilde{V}^N(t, \bx, i_0) \rc = \mathcal{O}\Big(\frac1N\Big), \qquad (t, \bx, i_0) \in [0,T] \times \R^N, \times \cs,
\end{multline}
and we obtain 
\beq\label{eq:V1N}
\sup_{(t, \mu, i_0) \in \co^N \times \cs} \lln V(t, \mu, i_0)- \hat{u}^N(t, \mu, i_0) \rrn \leq \dfrac{C}{N}. 
\eeq
\end{lemma}

\begin{proof}
%Since $\mu^N(\bx) \in \co_M^N$ one has that $\bx \in [-B,B]^N$ where $B = \frac{M e^{K^{\ast} T}}{\der} + 1$.
%Consequently, 
The term $R_1^{v^{\ast, N}} $ is bounded 
because $D_{m^2}^2$ is continuously differentiable 
%and  $\pl_y \pl_{y'}D_{m^2}^2 \hat{u}^N (t, \mu^N(\bx), i_0,x,x)$ is 
with uniformly bounded derivatives.   
%for all $x$ in the support of $\mu^N(\bx)$. We thus obtain $|R_1^{v^{\ast,N}}| \leq \frac{C}{N}$.
Observe that by definition of the linear functional derivative (with non-relevant arguments omitted)
\begin{align*}
&D_m \hat{u}^N(\mu^N(\bx) + \dfrac{r}{N}(\der_{x+y} - \der_{y}), x+y) - D_m \hat{u}^N(\mu^N(\bx), x+y)\\
 &= \dfrac{r}{N} \int_0^1 \bigg( D_{m^2}^2\hat{u}^N(\mu^N(\bx) + \dfrac{r_1 r}{N} (\der_{x+y} - \der_x), x+y, x+y)\\
 &\hspace{0.75in}- D_{m^2}^2 \hat{u}^N(\mu^N(\bx) + \dfrac{r_1 r}{N} (\der_{x+y} - \der_x),x+y,y) \bigg) dr_1.
\end{align*}
Since $\ga$ satisfies Assumption~\ref{hyp:main}(iii), 
%and $\mu^N(\bx) \in \co_M^N$
 we conclude that $R_2^{v^{\ast, N}} \leq \frac{C}{N}$. This implies \eqref{eq:HJB-VN} and, as a consequence, $\tilde{V}^N(t,\bx,i_0) = V(t, \mu^N(\bx),i_0)$ is such that 
 $\tilde{V} - \frac{C}{N}(T-t)$ is a (classical) subsolution to \eqref{eq:HJB-n}, while $\tilde{V} + \frac{C}{N}(T-t)$ is a (classical) supersolution. Thus the comparison principle gives 
 \[
 \sup_{(t, \bx, i_0) \in [0,T] \times \R^N \times  \cs} \lln \tilde{V}(t, \bx, i_0)- u^N(t, \bx, i_0) \rrn \leq \dfrac{C}{N},
 \]
 which is equivalent to \eqref{eq:V1N}.
\end{proof}

We can now estimate the distance between the feedback functions

\begin{lemma}
Let $\mu^N_s$ be the optimal empirical measure process for the centralized finite-agent control problem. Under Hypotheses~\ref{hyp:poc} and \ref{hyp:poc-H} we have
\beq\label{eq:v-bound}
\E \int_0^T |v^{\ast}(s, \mu^N_s, \alpha_s) - v^{\ast,N}(s, \mu^N_s, \alpha_s)|^2 ds \leq \dfrac{C}{N}.
\eeq
\end{lemma}

\begin{proof}
We compute the expected value of $V$ along $\mu^N$.
\begin{align*}
&\E \lc V(T, \mu_T^{N}, \al_T) \rc - \E\lc V(0, \mu_0^{N}, \al_0) \rc = \E \lc \int_0^T \dfrac{\pl}{\pl s} V(s, \mu_s^N, \al_s) ds \rc\\
& = \E \lc -\int_0^T \ch^{v^{\ast}} (s, \mu_s^N, \al_s, D_mV)ds + \int_0^T \sum_{j_0 \in \cs} q_{\al_{s-}j_0} [V(s, \mu_s^N, j_0) - V(s, \mu_s^N, \al_{s-})] \rc.
\end{align*}
From \eqref{eq:V1N} we thus have
\begin{align*}
 &\E \lc V(T, \mu_T^{N}, \al_T) \rc - \E\lc V(0, \mu_0^{N}, \al_0) \rc\\
 &\leq  \E\lc - \int_0^T \ch^{v^{\ast,N}}(s, \mu_s^N, \al_s, D_m V) +\sum_{j_0 \in \cs} q_{\al_{s-} j_0} [\hat{u}^N(s, \mu_s^N, j_0) - \hat{u}^N(s, \mu_s^N, \al_{s-})] ds \rc\\
&\hspace{1in}+ \E \lc \int_0^T \lp \ch^{v^{\ast, N}} (s, \mu_s^N, \al_s, D_m V) - \ch^{v^{\ast}}(s, \mu_s^N, \al_s, D_mV) \rp  ds \rc + \dfrac{C}{N}.
\end{align*}
From the relation \eqref{eq:H-relation} we then have
\begin{align*}
&\E \lc V(T, \mu_T^{N}, \al_T) \rc - \E\lc V(0, \mu_0^{N}, \al_0) \rc\\
&\leq \E \lc -\int_0^T H^{v^{\ast, N}} (s, \mu_s^N, \al_s, N\tilde{V}^N, ND\tilde{V}^N, ND^2\tilde{V}^N)ds\right.\\
& \left.+\sum_{j_0 \in \cs} q_{\al_{s-} j_0} [\hat{u}^N(s, \mu_s^N, j_0) - \hat{u}^N(s, \mu_s^N, \al_{s-})] ds \rc
+ \int_0^T [R_1^{v^{\ast,N}}(s, \mu_s^N, \al_s, \tilde{V}^N) + R_2^{v^{\ast,N}}(s, \mu_s^N, \al_s, \tilde{V}^N)]  ds  \\
&+\E \lc \int_0^T \lp \ch^{v^{\ast, N}} (s, \mu_s^N, \al_s, D_m V) - \ch^{v^{\ast}}(s, \mu_s^N, \al_s, D_mV) \rp  ds \rc + \dfrac{C}{N}.
\end{align*}
Consequently since $R_1 + R_2$ satisfy Lemma~\ref{lem:R} this implies from \eqref{eq:HJB-n}
\begin{align*}
\E \lc V(T, \mu_T^{N}, \al_T) \rc - & \E\lc V(0, \mu_0^{N}, \al_0) \rc \leq \E \lc \hat{u}^N(T, \mu_T^{N}, \al_T) \rc - \E\lc \hat{u}^N(0, \mu_0^{N}, \al_0) \rc\\ 
&+ \E \lc \int_0^T \lp \ch^{v^{\ast, N}} (s, \mu_s^N, \al_s, D_m V) - \ch^{v^{\ast}}(s, \mu_s^N, \al_s, D_mV) \rp  ds \rc + \dfrac{C}{N}.
\end{align*}
Again using Theorem~\ref{thm:1} we now obtain
\beq\label{eq:5.2-1}
\E \lc \int_0^T \lp \ch^{v^{\ast}} (s, \mu_s^N, \al_s, D_mV) - \ch^{v^{\ast, N}}(s, \mu_s^N, \al_s, D_mV) \rp  ds \rc \leq  \dfrac{C}{N}.
\eeq
%Now 
%\begin{multline*}
%\ch^{v^{\ast, N}} (s, \mu_s^N, \al_s, D_m V) - \ch^{v^{\ast}}(s, \mu_s^N, \al_s, D_mV)\\
%  = \lla \mu_s^N, \lp f(s, \cdot, \mu_s^N, v^{\ast}, \al_s) - f(s, \cdot, \mu_s^N, v^{\ast,N}, \al_s) \rp + \lp \cl_s^{\mu_s^N, \al_s, v^{\ast}}[D_mV]  - \cl_s^{\mu_s^N, \al_s, v^{\ast,N}}[D_m V] \rp\rra
%\end{multline*}
%We have
%\begin{multline*}
%\cl_s^{\mu_s^N, \al_s, v^{\ast}}[D_mV](x)  - \cl_s^{\mu_s^N, \al_s, v^{\ast,N}}[D_m\hat{u}^N](x)
%\geq -C \lp \lln \dfrac{\pl}{\pl x}(D_m V - D_m \hat{u}^N) \rrn + \lln \dfrac{\pl^2}{\pl x^2}(D_m V - D_m \hat{u}^N) \rrn\right.\\ + \left.\int_{\R} [(D_m V - D_m \hat{u}^N)(x+y) - (D_m V - D_m \hat{u}^N)(x)]\ga(dy)  \rp \geq - \dfrac{C}{N^{1/16}}.
%\end{multline*}
Since $v^{\ast}$ maximizes the pre-Hamiltonian $\ch^{v}(s, \mu_s^N, \al_s, D_mV)$ we have from Assumption~\ref{hyp:poc-H}
$$
\ch^{v^{\ast}} (s, \mu_s^N, \al_s, D_m V) - \ch^{v^{\ast,N}}(s, \mu_s^N, \al_s, D_mV) \geq \la |v_s^{\ast} - v_s^{\ast,N}|^2.
$$
This implies from \eqref{eq:5.2-1} that \eqref{eq:v-bound} holds. 
%$$
%\E \int_0^T |v_s^{\ast} - v_s^{\ast,N}|^2 ds \leq \dfrac{C}{\sqrt{N}}.
%$$
\end{proof}

%{\begin{remark}
%Under Assumptions~\ref{hyp:poc} and \ref{hyp:poc-H} we believe that the inequality \eqref{eq:thm1} is true with a stricter upper bound $\frac{C}{{N}}$. This would provide improved bounds in \eqref{eq:v-bound}, and consequently in Theorem~\ref{thm:5-1} and Proposition~\ref{prop:5-2} in the sequel. This is left for future work.
%\end{remark}}

Let $\cb = \{B^k\}_{k=1}^N$ be independent Brownian motions, $\cu = \{U_l\}_{l=1}^{\infty}$ be an infinite sequence of i.i.d. random variables from $\ga$, and $\cn = \{N^k\}_{k=1}^N$ be independent standard Poisson processes. Recall $v^{\ast,N}$ is the unique optimal feedback control for the $N$-agent optimization and $\mu^{ N}$ is the corresponding optimal measure trajectory. In addition let $(X^{\ast,k})_{1\leq k \leq N}$ be the corresponding optimal jump diffusion with $v^{\ast, N}$ as the control and driven by $\cb$, $\cu$ and $\cn$ as follows 
$$
dX_s^{\ast,k} = b(s, X_s^{\ast,k}, \mu_s^N, v_s^{\ast,N}, \al_{s-}) ds + \si(s, X_s^{\ast,k}, \mu_s^{N}, v_s^{\ast,N}, \al_{s-}) dB_s^k + dJ_s^{\ast,k},
$$
where 
$$
J_s^{\ast,k} = \sum_{l=1}^{N_t^{\ast,k}} U_l, \qquad N_t^{\ast,k} = N^k \lp \int_0^t \la \lp s, X_s^{\ast,k},\mu_s^N, v_s^{\ast,N}, \al_{s-} \rp ds\rp.
$$
Let $(Y^k)_{1 \leq k \leq N}$ be defined similarly driven by the same $\cb$, $\cu$ and $\cn$, but with $v^{\ast}$ as the control. Let $\rho^N$ be the corresponding empirical measure trajectory. 

Recall $v^{\ast}$ is the unique optimal feedback control for the the mean field control problem and $\mu$ is the corresponding optimal measure trajectory. In addition let $X^{\ast}$ be the corresponding optimal jump diffusion with $v^{\ast}$ as the control. We also consider $N$ i.i.d. copies of $X^{\ast}$, namely, $\{\tilde{X}^{\ast,k}\}_{1 \leq k \leq N}$ which are driven by $\cb$, $\cu$ and $\cn$.}

%We will perform two main steps in order to show $d(\mu^N, \mu) < C \ep_N$, namely, first we will show $d(\mu^N, \rho^N) < C \ep_N$ and then $d(\rho^N, \mu) < C\ep_N$.

The proof of the following theorem is a standard argument of propagation of chaos, thus it is omitted. It uses the $W_2$-Lipschitz-continuity of the coefficients and the established bound \eqref{eq:v-bound}. 

\begin{theorem}\label{thm:5-1}
Under Assumptions~\ref{hyp:poc}, \ref{hyp:poc-H}, and \ref{hyp:main}-(v), 
\begin{align}
	\E \lp \sup_{s \in [0,T]} \frac{1}{N} \sum_{k=1}^N |X_s^{\ast,k} - Y_s^k|^2 \rp &\leq \dfrac{C}{N}\\
	\E \lp \sup_{s \in [0,t]} \frac{1}{N} \sum_{k=1}^N |\tilde{X}_s^{k} - \tilde{X}_s^{\ast} |^2 \rp &\leq \dfrac{C}{N} \\
	\E \lc \sup_{t \in [0,T]} W_2(\mu_t^{N}, \rho^N_t) \rc &\leq \dfrac{C}{\sqrt{N}} \\
	\E \lc \sup_{t \in [0,T]} W_2(\mu_t^{N}, \mu_t) \rc &\leq \dfrac{C}{N^{1/9}} 	
\end{align}
\end{theorem}

\bigskip

\bibliographystyle{plain}
\bibliography{references}

\begin{thebibliography}{10}

\bibitem{bayraktar-cosso-pham-2}
Erhan Bayraktar, Andrea Cosso, and Huy\^{e}n Pham.
\newblock Randomized dynamic programming principle and {F}eynman-{K}ac
  representation for optimal control of {M}c{K}ean-{V}lasov dynamics.
\newblock {\em Trans. Amer. Math. Soc.}, 370(3):2115--2160, 2018.

\bibitem{bayraktar-li}
Erhan Bayraktar and Jiaqi Li.
\newblock On the controller-stopper problems with controlled jumps.
\newblock {\em Appl. Math. Optim.}, 80(1):195--222, 2019.

\bibitem{stoch-perron}
Erhan Bayraktar and Mihai S\^{\i}rbu.
\newblock Stochastic {P}erron's method for {H}amilton-{J}acobi-{B}ellman
  equations.
\newblock {\em SIAM J. Control Optim.}, 51(6):4274--4294, 2013.

\bibitem{soner}
Matteo Burzoni, Vincenzo Ignazio, A.~Max Reppen, and H.~M. Soner.
\newblock Viscosity solutions for controlled {M}c{K}ean-{V}lasov
  jump-diffusions.
\newblock {\em SIAM J. Control Optim.}, 58(3):1676--1699, 2020.

\bibitem{carda-souga-1}
Pierre Cardaliauet, Samuel Daudin, Joe Jackson, and Panagiotis Souganidis.
\newblock An algebraic convergence rate for the optimal control of
  mckean-vlasov dynamics, 2022.

\bibitem{carda-souga-2}
Pierre Cardaliauet and Panagiotis Souganidis.
\newblock Regularity of the value function and quantitative propagation of
  chaos for mean field control problems, 2022.

\bibitem{carmona-delarue-p-mfg}
Ren\'{e} Carmona and Fran\c{c}ois Delarue.
\newblock Probabilistic analysis of mean-field games.
\newblock {\em SIAM J. Control Optim.}, 51(4):2705--2734, 2013.

\bibitem{carmona-delarue}
Ren\'{e} Carmona and Fran\c{c}ois Delarue.
\newblock Forward-backward stochastic differential equations and controlled
  {M}c{K}ean-{V}lasov dynamics.
\newblock {\em Ann. Probab.}, 43(5):2647--2700, 2015.

\bibitem{carmona-delarue-volm1}
Ren\'{e} Carmona and Fran\c{c}ois Delarue.
\newblock {\em Probabilistic theory of mean field games with applications.
  {I}}, volume~83 of {\em Probability Theory and Stochastic Modelling}.
\newblock Springer, Cham, 2018.
\newblock Mean field FBSDEs, control, and games.

\bibitem{cecchin}
Alekos Cecchin.
\newblock Finite state {$N$}-agent and mean field control problems.
\newblock {\em ESAIM Control Optim. Calc. Var.}, 27:Paper No. 31, 33, 2021.

\bibitem{djete-possamai-tan}
Fabrice~Mao Djete, Dylan Possamaï, and Xiaolu Tan.
\newblock Mckean-vlasov optimal control: limit theory and equivalence between
  different formulations, 2020.

\bibitem{djete-extended}
Mao~Fabrice Djete.
\newblock Extended mean field control problem: a propagation of chaos result,
  2020.

\bibitem{xiong-9}
Robert~J Elliott, Leunglung Chan, and Tak~Kuen Siu.
\newblock Option pricing and esscher transform under regime switching.
\newblock {\em Annals of Finance}, 1(4):423--432, 2005.

\bibitem{xiong-7}
Robert~J. Elliott and Tak~Kuen Siu.
\newblock On risk minimizing portfolios under a {M}arkovian regime-switching
  {B}lack-{S}choles economy.
\newblock {\em Ann. Oper. Res.}, 176:271--291, 2010.

\bibitem{fleming-soner}
Wendell~H. Fleming and H.~Mete Soner.
\newblock {\em Controlled {M}arkov processes and viscosity solutions},
  volume~25 of {\em Stochastic Modelling and Applied Probability}.
\newblock Springer, New York, second edition, 2006.

\bibitem{gangbo-mayorga-swiech}
Wilfrid Gangbo, Sergio Mayorga, and Andrzej \'{S}wi\k{e}ch.
\newblock Finite dimensional approximations of {H}amilton-{J}acobi-{B}ellman
  equations in spaces of probability measures.
\newblock {\em SIAM J. Math. Anal.}, 53(2):1320--1356, 2021.

\bibitem{germain2022rate}
Maximilien Germain, Huy{\^e}n Pham, and Xavier Warin.
\newblock Rate of convergence for particle approximation of pdes in wasserstein
  space.
\newblock {\em Journal of Applied Probability}, 59(4), 2022.

\bibitem{song}
Jiamin Jian, Peiyao Lai, Qingshuo Song, and Jiaxuan Ye.
\newblock Regime switching mean field games with quadratic costs, 2021.

\bibitem{lacker}
Daniel Lacker.
\newblock Limit theory for controlled {M}c{K}ean-{V}lasov dynamics.
\newblock {\em SIAM J. Control Optim.}, 55(3):1641--1672, 2017.

\bibitem{lauriere2022convergence}
Mathieu Lauriere and Ludovic Tangpi.
\newblock Convergence of large population games to mean field games with
  interaction through the controls.
\newblock {\em SIAM Journal on Mathematical Analysis}, 54(3):3535--3574, 2022.

\bibitem{gen-yin-2}
Son~L. Nguyen, Dung~T. Nguyen, and George Yin.
\newblock A stochastic maximum principle for switching diffusions using
  conditional mean-fields with applications to control problems.
\newblock {\em ESAIM Control Optim. Calc. Var.}, 26:Paper No. 69, 26, 2020.

\bibitem{george-yin}
Son~L. Nguyen, George Yin, and Tuan~A. Hoang.
\newblock On laws of large numbers for systems with mean-field interactions and
  {M}arkovian switching.
\newblock {\em Stochastic Process. Appl.}, 130(1):262--296, 2020.

\bibitem{gen-yin}
Son~L. Nguyen, George Yin, and Dung~T. Nguyen.
\newblock A {G}eneral {S}tochastic {M}aximum {P}rinciple for {M}ean-{F}ield
  {C}ontrols with {R}egime {S}witching.
\newblock {\em Appl. Math. Optim.}, 84(3):3255--3294, 2021.

\bibitem{pham-viscosity}
Huy\^{e}n Pham.
\newblock Optimal stopping of controlled jump diffusion processes: a viscosity
  solution approach.
\newblock {\em J. Math. Systems Estim. Control}, 8(1):27 pp.\, 1998.

\bibitem{pham-wei}
Huy\^{e}n Pham and Xiaoli Wei.
\newblock Dynamic programming for optimal control of stochastic
  {M}c{K}ean-{V}lasov dynamics.
\newblock {\em SIAM J. Control Optim.}, 55(2):1069--1101, 2017.

\bibitem{wang-zhang-12}
Bing-Chang Wang and Ji-Feng Zhang.
\newblock Mean field games for large-population multiagent systems with
  {M}arkov jump parameters.
\newblock {\em SIAM J. Control Optim.}, 50(4):2308--2334, 2012.

\bibitem{wang-zhang-17}
Bing-Chang Wang and Ji-Feng Zhang.
\newblock Social optima in mean field linear-quadratic-{G}aussian models with
  {M}arkov jump parameters.
\newblock {\em SIAM J. Control Optim.}, 55(1):429--456, 2017.

\bibitem{xiong-19}
Q.~Zhang.
\newblock Stock trading: an optimal selling rule.
\newblock {\em SIAM J. Control Optim.}, 40(1):64--87, 2001.

\bibitem{xiong}
Xin Zhang, Zhongyang Sun, and Jie Xiong.
\newblock A general stochastic maximum principle for a {M}arkov regime
  switching jump-diffusion model of mean-field type.
\newblock {\em SIAM J. Control Optim.}, 56(4):2563--2592, 2018.

\bibitem{zhou-yin}
Xun~Yu Zhou and G.~Yin.
\newblock Markowitz's mean-variance portfolio selection with regime switching:
  a continuous-time model.
\newblock {\em SIAM J. Control Optim.}, 42(4):1466--1482, 2003.

\end{thebibliography}

\end{document}